 \newtheorem{thm}{Theorem}[section]
 \newtheorem{lem}[thm]{Lemma}
 \theoremstyle{definition}
 \theoremstyle{remark}
 \newtheorem{rem}[thm]{Remark}
 \newtheorem{ex}{Example}
 \numberwithin{equation}{section}
\begin{document}

%
%
%
%
%
%
%
%
%

\title[Random average sampling]
 {Random average sampling and reconstruction in shift-invariant subspaces of mixed Lebesgue spaces}

\author[Ankush Kumar Garg]{Ankush Kumar Garg}
\address{%
School of Mathematics,   \\
Indian Institute of Science Education\\
and Research Thiruvananthapuram,\\
Maruthamala P.O., Vithura,\\
 Thiruvananthapuram-695551.}

\email{ankush16@iisertvm.ac.in}

\author{S. Arati}
\address{School of Mathematics,   \\
Indian Institute of Science Education\\
and Research Thiruvananthapuram,\\
Maruthamala P.O., Vithura,\\
Thiruvananthapuram-695551.}
\email{aratishashi@iisertvm.ac.in}

\author{P. Devaraj}
\address{School of Mathematics,   \\
Indian Institute of Science Education\\
and Research Thiruvananthapuram,\\
Maruthamala P.O., Vithura,\\
Thiruvananthapuram-695551.}
\email{devarajp@iisertvm.ac.in}
\subjclass{46E30; 94A20; 94A12}
\keywords{mixed Lebesgue spaces; probabilistic reconstruction; random average sampling; sampling inequality; shift-invariant subspaces. }

\date{\today}

\maketitle
\begin{abstract}
 In this paper, the problem of reconstruction of signals in mixed Lebesgue spaces from their random average samples has been studied. Probabilistic sampling inequalities for certain subsets of shift-invariant spaces have been derived. It is shown that the probabilities increase to one when the sample size increases. Further, explicit reconstruction formulae for signals in these subsets have been obtained for which the numerical simulations have also been performed. 
\end{abstract}

\section{Introduction}

In information theory, it is highly desirable to gain maximum  information about a function (or signal) using the least available data. Image processing, data analysis, computer tomography, bio-engineering and artificial intelligence are a few fields which often deal with sampling and reconstruction problems. In 1949, the celebrated Shannon sampling theorem was proved which turned out to be a milestone in this field of study and set the foundation for information theory. Over these years, the theory of sampling has been intensively studied. For a detailed survey on the theory, we refer to Butzer and Stens \cite{ButzerStens}. Further, the theory has been developed for shift-invariant spaces with a single generator as well as with multiple generators in various contexts such as regular, irregular, average and multi-channel sampling by several authors.\\

 The average sampling is a useful and a practical model of sampling when the physical devices fail to measure the exact value of a signal at a given time. Sun and Zhou gave the reconstruction formulae from local convoluted samples for band-limited functions in \cite{sun5, sun6}, for spline subspaces in \cite{sun3, sun2} and for shift-invariant subspaces with symmetric averaging functions in \cite{sun4}. The average sampling and reconstruction algorithms for shift-invariant subspaces were also studied in \cite{AldroubiSunTang, kan, deva1}. In \cite{NashedSunXian}, the average sampling has been analyzed for a reproducing kernel subspace of $L^{p}$- spaces. Li et al. gave the reconstruction formula for the functions in local shift-invariant subspaces of $L^{2}(\mathbb{R}^{d})$ from average random samples in \cite{LiWenXian2}. \\

Random sampling is another type of sampling which has been used in practical applications such as learning theory, image filtering and compressed sensing. In \cite{bass-grochenig1}, Bass and Gröchenig discussed random sampling for multivariate trigonometric polynomials. They obtained the probabilistic sampling inequality for band-limited functions on $\mathbb{R}$ in \cite{bass-grochenig2} and the same for band-limited functions on $\mathbb{R}^{d}$ in \cite{bass-grochenig3}. Random sampling in shift-invariant spaces was studied in \cite{yangwei, yang, furr}. Yang and Tao in \cite{YangTao} studied random sampling and gave an approximation model for signals having bounded derivatives. Random sampling for reproducing kernel subspaces was analyzed in \cite{patelsampath, jiangli2, LiSunXian}.  
  The random sampling theory was also extended to mixed Lebesgue spaces which are generalized versions of the Lebesgue spaces. In this context, the random sampling for reproducing kernel subspaces was studied in \cite{Goyalpatelsampath} and that for multiply generated shift-invariant subspaces was analyzed in \cite{jiangli3}. For a detailed study of the mixed Lebesgue spaces, we refer to \cite{benzone}.\\
  
  This paper deals with the study of random average sampling for functions in mixed Lebesgue spaces using a probabilistic approach.
For $1 < p,q < \infty$, let $L^{p,q}(\mathbb{R} \times \mathbb{R}^{d})$ denote the mixed Lebesgue space, which consists of complex valued measurable functions on $\mathbb{R} \times \mathbb{R}^{d} $ such that
 $$ \bigg(\int_{\mathbb{R}}\bigg(\int_{\mathbb{R}^{d}}|f(x,y)|^{q}dy \bigg )^{p/q}dx \bigg)^{1/p} < \infty$$
  and let $L^{\infty, \infty}(\mathbb{R} \times \mathbb{R}^{d})$ denote the set of all complex valued measurable functions on $\mathbb{R}^{d+1}$ such that $\|f\|_{L^{\infty, \infty}(\mathbb{R}\times \mathbb{R}^{d})} := ess \sup |f| < \infty$.

\noindent
 Similarly, for $ 1<p,q < \infty$, $l^{p,q}(\mathbb{Z}\times \mathbb{Z}^{d})$ denotes the space of all complex sequences $c=\big(c(k_{1},k_{2})\big)_{(k_{1}\in \mathbb{Z}, k_{2}\in \mathbb{Z}^{d})}$ such that 
$$\|c\|_{l^{p,q}} := \left(\sum_{k_{1}\in \mathbb{Z}}\left(\sum_{k_{2} \in \mathbb{Z}^{d}}|c(k_{1}, k_{2})|^{q}\right)^{p/q}\right)^{1/p} < \infty$$ and $l^{\infty, \infty}(\mathbb{Z} \times \mathbb{Z}^{d})$ denotes the space of all complex sequences on $\mathbb{Z}^{d+1}$ such that $\|c\|_{l^{\infty, \infty}} := \displaystyle \sup_{k \in \mathbb{Z}^{d+1}}|c(k)| < \infty.$ We observe that $L^{p,p}(\mathbb{R} \times \mathbb{R}^{d}) = L^{p}(\mathbb{R}^{d+1})$ and $l^{p,p}(\mathbb{Z} \times \mathbb{Z}^{d}) = l^{p}(\mathbb{Z}^{d+1})$ for $1 < p < \infty.$

\bigskip
\noindent
We consider a multiply generated shift-invariant subspace of the mixed Lebesgue space $L^{p,q}(\mathbb{R} \times \mathbb{R}^{d}), 1 < p,q < \infty,$ of the form $$ V^{p,q}(\Phi)= \bigg \{\sum_{k_{1}\in \mathbb{Z}}\sum_{k_{2} \in \mathbb{Z}^{d}} {\textbf{c}^{T}(k_{1}, k_{2})}\Phi(\cdot -k_{1},\cdot - k_{2}): \textbf{c} \in \big(l^{p,q}(\mathbb{Z}\times\mathbb{Z}^{d})\big)^{r} \bigg\}$$ where $\Phi= (\phi_{1}, \phi_{2}, \dots, \phi_{r})^{T}$ with $\phi_{i} \in L^{p,q}(\mathbb{R} \times \mathbb{R}^{d})$ and $\textbf{c}= (c_{1}, c_{2}, \dots, c_{r})^{T}.$ \\

We prove the sampling inequalities for certain subsets of $V^{p,q}(\Phi)$ and estimate the probabilities with which they hold. Our results show that the probability tends to one as the number of samples increases. Further, using these sampling inequalities, we derive explicit reconstruction formulae. We also simulate the reconstruction numerically for certain examples and obtain the error estimates.

\section{Preliminaries} \label{sec2}
The subsets of $V^{p,q}(\Phi)$ that are needed for our analysis are defined as follows :\\
 Let $C_{K} $ denote the compact set $[-K_{1}, K_{1}]\times [-K_{2}, K_{2}]^{d} \subseteq \mathbb{R}\times \mathbb{R}^{d}$ for some positive constants $K_{1}$ and $K_{2}$ and $V^{p,q}(\Phi, \delta, C_{K})$ be the set 
 \begin{equation*}V^{p,q}(\Phi, \delta, C_{K}):= \big \{f \in V^{p,q}(\Phi): \|f\|_{L^{p,q}(C_K)} \geq(1-\delta)\|f\|_{L^{p,q}(\mathbb{R}\times \mathbb{R}^{d})} \big \},
  \end{equation*}
   where $0 < \delta <1$. In fact, $V^{p,q}(\Phi, \delta, C_{K})$ consists of the signals in $V^{p,q}(\Phi)$ whose energy is concentrated on the set $C_{K}.$

\noindent
For a positive integer $N$, $V_{N}^{p,q}(\Phi)$ denotes the finite dimensional subspace of  $V^{p,q}(\Phi)$ given by
\begin{equation*}
V_{N}^{p,q}(\Phi):= \bigg\{\sum_{i=1}^{r} \sum_{|k_{1}|\leq N} \sum_{|k_{2}|\leq N} c_{i}(k_{1}, k_{2}) \phi_{i}(\cdot - k_{1}, \cdot- k_{2}): c_{i} \in l^{p,q}\bigg([-N, N] \times [-N, N]^{d}\bigg)\bigg \},
\end{equation*} 
where for a multi-index $k=(k_{1}, k_{2}, \dots, k_{d}) \in \mathbb{Z}^{d}, |k| := \displaystyle \max_{1 \leq i \leq d}|k_{i}|.$\\
 The unit ball of the above space is 
\begin{equation*}
V_{N}^{p,q, *}(\Phi) := \big \{g \in V_{N}^{p,q}(\Phi) : \|g\|_{L^{p,q}(\mathbb{R}\times
 \mathbb{R}^{d})} = 1\big \}. 
\end{equation*}

\noindent
For $\omega > 0$ and $\psi \in L^{1}(\mathbb{R}^{d+1}),$ the subset $V_{N, \omega, \psi}^{p,q}(\Phi)$ is defined as
\begin{equation*}
V_{N, \omega, \psi}^{p,q}(\Phi) := \big \{f \in  V_{N}^{p,q}(\Phi) : \|f * \psi\|_{L^{p,q}(C_{K})} \geq \omega \big \} 
\end{equation*}
and its unit ball is denoted by $V_{N, \omega, \psi}^{p,q,*}(\Phi).$\\
Further, for $0 < \mu \leq 1,$ we define 
\begin{eqnarray}
V_{N, \psi}^{p,q}(\Phi, \mu, C_{K}):= \bigg \{f \in V_{N}^{p,q}(\Phi) &:& \mu \| \psi\|_{{L}^{1}(\mathbb{R}^{d+1})} \|f\|_{L^{p,q}(\mathbb{R} \times \mathbb{R}^{d})} \nonumber\\
&&  \quad \leq  \int_{C_{K}}|(f*\psi)(x,y)|dx dy \bigg \} \label{rte1}
\end{eqnarray}
\noindent
Also, for $0 < \delta <1,$ we define $ V_{\psi}^{p,q}(\Phi, \delta, C_{K})$   as
\begin{eqnarray*}
 V_{\psi}^{p,q}(\Phi, \delta, C_{K}) &:=& \big\{ f \in V^{p,q}(\Phi, \delta, C_{K}) : \|f* \psi\|_{L^{p,q}(C_{K})} \\
&&  \quad \quad \quad \geq  (1-\delta) \|\psi\|_{L^{1,1}(C_{K})} \|f\|_{L^{p,q}(\mathbb{R} \times \mathbb{R}^{d})} \big \}
\end{eqnarray*} 
and $V_{\psi}^{p,q,*}(\Phi, \delta, C_{K}) $ as its unit ball.\\

\noindent
We make the following assumptions for our study:

\noindent
$(A_{1})$ The generators $\phi_{1}, \phi_{2}, \dots , \phi_{r}$ have stable shifts, i.e., there exist positive constants $\alpha_{1}$ and $\alpha_{2}$ such that
\begin{equation}
\alpha_{1}\|\textbf{c}\|_{l^{p,q}}\leq \bigg\|\sum_{k_{1}\in \mathbb{Z}} \sum_{k_{2}\in \mathbb{Z}^{d}}\textbf{c}^{T}(k_{1}, k_{2}) \Phi(\cdot -k_{1}, \cdot -k_{2})\bigg \|_{L^{p,q}(\mathbb{R}\times \mathbb{R}^{d})} \leq \alpha_{2}\|\textbf{c}\|_{l^{p,q}},\label{2eq1}
 \end{equation} 
  for $\textbf{c} \in \left(l^{p,q} (\mathbb{Z} \times \mathbb{Z}^{d})\right)^{r}$ and $ \| \textbf{c} \|_{l^{p,q}} = \sum_{i=1}^{r} \| c_{i}\|_{l^{p,q}} < \infty.$ \\
 Also $\phi_{i}$'s are continuous with polynomial decay, i.e.,
  \begin{equation}
 |\phi_{i}(x,y)| \leq \dfrac{\tilde{c}}{(1+|x|)^{s_{1}}(1+|y|)^{s_{2}}}, (x,y) \in (\mathbb{R}\times \mathbb{R}^{d}) ,\label{2eq2}
  \end{equation}  
   where $\tilde{c}$ is a positive constant and $s_{1}, s_{2}$ are positive constants satisfying \\
$s_{1}, s_{2} > d+1 - \frac{1}{p} -\frac{d}{q}$. \\~\\
$(A_{2})$ The averaging function $\psi \in L^{1}( \mathbb{R}^{d+1})$ with supp$(\psi) \subseteq C_{K}.$  \\~\\
$(A_{3})$ There exists a probability density function  $\rho $ defined over $C_{K}$ such that
 \begin{equation} \mathcal{C}_{\rho, 1} \leq \rho(x, y) \leq \mathcal{C}_{\rho, 2} \label{2eq3}
 \end{equation}
  for all $(x,y) \in C_{K},$ where $\mathcal{C}_{\rho, 1}$ and $\mathcal{C}_{\rho, 2}$ are positive constants.\\~\\
  We provide some examples of generators satisfying the assumption $(A_{1})$ in Section \ref{sec5}.

\section{Random average sampling inequalities} \label{sec3}
 In this section, we shall show that the sampling inequalities hold with certain probabilities for functions in $V_{N, \omega, \psi}^{p,q}(\Phi), V_{N,\psi}^{p,q}(\Phi, \mu, C_{K})$ and $V_{\psi}^{p,q}(\Phi, \delta, C_{K}).$  The result for  $V_{N, \omega, \psi}^{p,q}(\Phi)$ states as follows:

\begin{thm} \label{thm3.5}
Let $\Phi, \psi $ and $\rho $ satisfy assumptions $(A_{1}), (A_{2})$ and $(A_{3})$ respectively. Further, let $\{(x_{j}, y_{k})\}_{j,k \in \mathbb{N}}$ be a sequence of i.i.d. random variables that are drawn from a general probability distribution over $C_{K}= [-K_{1}, K_{1}]\times [-K_{2}, K_{2}]^{d}$ and whose density function is $\rho$. Then for any $ \gamma \in (0,1), \; 0< \omega \leq \|\psi\|_{L^{1,1}(C_{K})}$ and

\begin{eqnarray}
   && nm    > \left( \dfrac{54r \sqrt{2} (\ln 2)(2N+1)^{(d+1)}\|\psi\|_{L^{1,1}(C_{K})}}{\left(\dfrac{\gamma \mathcal{C}_{\rho, 1} \left(c^{*} \|\psi\|_{L^{1,1}(C_{K})}\right)^{(1-pq)} \omega^{pq}}{(2K_{1})^{(q-1)} (2K_{2})^{d(p-1)}} \right)^{2}} \right )   \nonumber \\
  &&\quad \quad \quad \times  \left( 2\left(\dfrac{\gamma \mathcal{C}_{\rho, 1} \left(c^{*} \|\psi\|_{L^{1,1}(C_{K})}\right)^{(1-pq)} \omega^{pq}}{(2K_{1})^{(q-1)} (2K_{2})^{d(p-1)}} \right) + 81 \|\psi\|_{L^{1,1}(C_{K})}\right),\nonumber \\
   \label{eqnm}
   \end{eqnarray}

\noindent
 the sampling inequality
 
  \begin{equation} 
  \mathcal{A}_{\gamma, \omega}\|f\|_{L^{p,q}(\mathbb{R}\times \mathbb{R}^{d})} \leq \bigg \|\{(f*\psi)(x_{j}, y_{k})\}_{\substack{j=1,2,\dots, n;\\ k=1,2,\dots, m}} \bigg \|_{l^{p,q}} \leq \mathcal{B}_{\gamma, \omega}\|f\|_{L^{p,q}(\mathbb{R}\times \mathbb{R}^{d})} \label{3eq1}
  \end{equation} 
   holds  with probability at least  $1 - \mathcal{A}_{1}  e^{(-nm \beta_{1} )}  - \mathcal{A}_{2}   e^{( -nm \beta_{2})}$ for every $f$ in $V_{N,\omega, \psi}^{p,q}(\Phi), $ 
  \noindent
where \begin{eqnarray*}
\mathcal{A}_{\gamma, \omega} &=& \dfrac{(1-\gamma) \mathcal{C}_{\rho, 1}(c^{*} \|\psi\|_{L^{1,1}(C_{K})})^{(1-pq)} \omega^{pq}}{(2K_{1})^{(q-1)}(2K_{2})^{d(p-1)}} n^{\frac{1}{p}} m^{\frac{1}{q}},\\
\mathcal{B}_{\gamma, \omega} &=& \dfrac{ \mathcal{C}_{\rho, 2}\|\psi\|_{L^{1,1}(C_{K})}}{(2K_{1})^{\left(\frac{1-p}{p} \right)}(2K_{2})^{d \left(\frac{1-q}{q} \right)}}nm + \dfrac{\gamma \mathcal{C}_{\rho, 1}(c^{*}\|\psi\|_{L^{1,1}(C_{K})})^{(1-pq)} \omega^{pq}}{(2K_{1})^{(q-1)}(2K_{2})^{d(p-1)}}nm,\\
\mathcal{A}_{1} &=& 2 \exp \bigg(r(2N+1)^{(d+1)} \ln\big(4c^{*} + 1\big) \bigg) ,\\
\beta_{1} &=& \dfrac{(2K_{1})^{(1-q)} (2K_{2})^{d(1-p)}\left(\frac{\sqrt{3}}{2}\gamma \mathcal{C}_{\rho, 1} \left(\frac{\omega}{c^{*}\|\psi\|_{L^{1,1}(C_{K})}}\right )^{pq}\right)^{2}}{6(2K_{1})^{(q-1)} (2K_{2})^{d(p-1)}+\gamma \mathcal{C}_{\rho, 1} \left(\frac{\omega}{c^{*}\|\psi\|_{L^{1,1}(C_{K})}}\right )^{pq}}, \\
\mathcal{A}_{2} &=& \dfrac{4 \bigg((2c^{*} +\frac{1}{4})(c^{*}+ \frac{1}{4}) \bigg )^{r(2N+1)^{(d+1)}}}{3r(\ln 2)^{2} (2N+1)^{(d+1)}},\\
\beta_{2} &=&  \dfrac{(2K_{1})^{(1-q)} (2K_{2})^{d(1-p)}\left(\gamma \mathcal{C}_{\rho, 1} \left(\frac{\omega}{c^{*}\|\psi\|_{L^{1,1}(C_{K})}}\right )^{pq} c^{*}\right)^{2}}{18 \sqrt{2}\left(81(2K_{1})^{(q-1)} (2K_{2})^{d(p-1)}+ 2 \gamma \mathcal{C}_{\rho, 1} \left(\frac{\omega}{c^{*}\|\psi\|_{L^{1,1}(C_{K})}}\right )^{pq}c^{*}\right)},   \ \text{and} \\
c^{*} &=& \dfrac{4 \tilde{c}}{2^{ \left(\frac{p+q}{pq} \right)} \alpha_{1}} \left(\sum_{k_{1} \in \mathbb{Z}} \dfrac{1}{\left(1 + |k_{1}| \right )^{ \left(\frac{s_{1}p}{p-1} \right)}}\right)^{ \left(\frac{p-1}{p} \right)} \left(\sum_{k_{2} \in \mathbb{Z}^{d}} \dfrac{1}{\left(1 + |k_{2}| \right)^{ \left(\frac{s_{2}q}{q-1} \right)}}\right)^{ \left(\frac{q-1}{q} \right)}.
\end{eqnarray*}

\end{thm}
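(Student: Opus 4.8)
The plan is to reduce to the unit sphere by homogeneity: since both bounds in \eqref{3eq1} are positively homogeneous of degree one in $f$, it suffices to prove them for every $f\in V_{N,\omega,\psi}^{p,q,*}(\Phi)$, i.e. with $\|f\|_{L^{p,q}(\mathbb{R}\times\mathbb{R}^d)}=1$ and $\|f*\psi\|_{L^{p,q}(C_K)}\geq\omega$. Throughout I write $g=f*\psi$ and $Z_{jk}=(x_j,y_k)$, so that by hypothesis the $nm$ points $Z_{jk}$ are i.i.d. on $C_K$ with density $\rho$ and the numbers $|g(Z_{jk})|$ are i.i.d.\ bounded random variables.

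First I would record the deterministic ingredients. Combining the stable-shift bound \eqref{2eq1}, the polynomial decay \eqref{2eq2}, and H\"older's inequality in the coefficient sums yields the uniform pointwise estimate $\|g\|_{L^{\infty,\infty}}\leq c^*\|\psi\|_{L^{1,1}(C_K)}\|f\|_{L^{p,q}}$, which both produces the constant $c^*$ and gives the a.s.\ bound $|g(Z_{jk})|\leq c^*\|\psi\|_{L^{1,1}(C_K)}=:M$ needed for Bernstein. Next I would prove two pairs of H\"older inequalities. On the finite grid, for nonnegative arrays, $\|a\|_{l^{p,q}}\geq n^{1/p-1}m^{1/q-1}\sum_{j,k}a_{jk}$ and $\|a\|_{l^{p,q}}\leq\sum_{j,k}a_{jk}$; on the cube $C_K$, $\|g\|_{L^{p,q}(C_K)}^{pq}\leq(2K_1)^{q-1}(2K_2)^{d(p-1)}\int_{C_K}|g|^{pq}$ and $\int_{C_K}|g|\leq(2K_1)^{(p-1)/p}(2K_2)^{d(q-1)/q}\|g\|_{L^{p,q}(C_K)}$. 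These four estimates are exactly the sources of the powers of $2K_1,2K_2$ and of $n,m$ in $\mathcal{A}_{\gamma,\omega}$ and $\mathcal{B}_{\gamma,\omega}$.

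The probabilistic core is Bernstein's inequality applied, for each fixed $f$, to the i.i.d.\ sum $S=\sum_{j,k}|g(Z_{jk})|^{pq}$, whose mean is $nm\int_{C_K}|g|^{pq}\rho$ and whose terms lie in $[0,M^{pq}]$. For the lower bound I would chain $\|\{g(Z_{jk})\}\|_{l^{p,q}}\geq n^{1/p-1}m^{1/q-1}\sum_{j,k}|g(Z_{jk})|\geq n^{1/p-1}m^{1/q-1}M^{1-pq}S$ (the second step using $a\geq a^{pq}/M^{pq-1}$), then invoke the lower tail of Bernstein to replace $S$ by $(1-\gamma)nm\,\mathcal{C}_{\rho,1}\int_{C_K}|g|^{pq}$, and finally the cube-H\"older bound with $\|g\|_{L^{p,q}(C_K)}\geq\omega$ to reach $\mathcal{A}_{\gamma,\omega}$, with failure probability $e^{-nm\beta_1}$ governed by the Bernstein exponent (variance proxy $M^{pq}\mathbb{E}|g|^{pq}$, range $M^{pq}$). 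The upper bound is easier: $\|\{g(Z_{jk})\}\|_{l^{p,q}}\leq\sum_{j,k}|g(Z_{jk})|$, and the upper tail of Bernstein, together with $\mathbb{E}|g(Z)|\leq\mathcal{C}_{\rho,2}\int_{C_K}|g|$ and Young's inequality $\|g\|_{L^{p,q}}\leq\|\psi\|_{L^{1,1}(C_K)}$, produces the main term of $\mathcal{B}_{\gamma,\omega}$ plus the $\gamma$-error term, with failure probability $e^{-nm\beta_2}$.

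The last and hardest step is to upgrade these pointwise-in-$f$ statements to one uniform over the continuum $V_{N,\omega,\psi}^{p,q,*}(\Phi)$. I would fix a finite $\varepsilon$-net of the unit sphere of the $r(2N+1)^{d+1}$-dimensional space $V_N^{p,q}(\Phi)$, whose cardinality admits a volumetric bound of the form $(C/\varepsilon)^{r(2N+1)^{d+1}}$; this is precisely where the combinatorial prefactors $\mathcal{A}_1=2\exp\!\big(r(2N+1)^{d+1}\ln(4c^*+1)\big)$ and $\mathcal{A}_2$ originate. A union bound over the net multiplies the Bernstein failure probabilities by these cardinalities, while a Lipschitz estimate for $f\mapsto\|\{g(Z_{jk})\}\|_{l^{p,q}}$ (again using $c^*$) transfers the inequalities from the net to all of $V_{N,\omega,\psi}^{p,q,*}(\Phi)$ at the cost of absorbing the perturbation into $\gamma$. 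The sample-size hypothesis \eqref{eqnm} is exactly what guarantees that $nm$ is large enough for the exponential gain $e^{-nm\beta_i}$ to dominate the net cardinality, so that $1-\mathcal{A}_1e^{-nm\beta_1}-\mathcal{A}_2e^{-nm\beta_2}$ is a genuine probability. The main obstacle is this uniformization in the mixed norm: both the covering estimate and the Lipschitz transfer must be executed in the asymmetric $l^{p,q}$/$L^{p,q}$ geometry and their constants tracked explicitly, since it is this bookkeeping that pins down $\beta_1,\beta_2$ and the precise shape of \eqref{eqnm}.
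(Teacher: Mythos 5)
Your deterministic scaffolding coincides exactly with the paper's: the sup-norm bound $\|f\ast\psi\|_{L^{\infty,\infty}(C_K)}\le c^{*}\|\psi\|_{L^{1,1}(C_K)}\|f\|_{L^{p,q}(\mathbb{R}\times\mathbb{R}^{d})}$ (Lemma \ref{lem1}, whose $c'$ equals $c^{*}$, combined with Lemma \ref{0lemma}), the discrete H\"older bounds producing $n^{1/p}m^{1/q}$, and the cube inequality $\|g\|^{pq}_{L^{p,q}(C_K)}\le(2K_1)^{q-1}(2K_2)^{d(p-1)}\int_{C_K}|g|^{pq}$ are precisely how $\mathcal{A}_{\gamma,\omega}$ and $\mathcal{B}_{\gamma,\omega}$ arise. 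The probabilistic core, however, is organized differently in the paper, and your version of it leaves a genuine gap. The paper never concentrates the $pq$-th power sums: it introduces $Y_{j,k}(f)=|(f\ast\psi)(x_j,y_k)|-\int_{C_K}\rho\,|f\ast\psi|$ (first powers), and proves a single uniform deviation estimate, Lemma \ref{lemm3.3}, for $\sup_{f\in V^{p,q,*}_{N}(\Phi)}\big|\sum_{j,k}Y_{j,k}(f)\big|$; the exponent $pq$ enters only deterministically, through $\int_{C_K}\rho|g|\ge\mathcal{C}_{\rho,1}\|g\|^{1-pq}_{L^{\infty,\infty}(C_K)}\int_{C_K}|g|^{pq}$ applied to the expectation. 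On the one event of Lemma \ref{lemm3.3}, taken with $\lambda=\gamma\, nm\,\mathcal{C}_{\rho,1}(c^{*}\|\psi\|_{L^{1,1}(C_K)})^{1-pq}\omega^{pq}(2K_1)^{1-q}(2K_2)^{d(1-p)}$, \emph{both} sides of \eqref{3eq1} follow simultaneously. In particular, your assignment of $\beta_1$ to the lower bound and $\beta_2$ to the upper bound does not match what these constants are: the two exponential terms are the two layers of the uniformization, not the two sides of the inequality.

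The gap is precisely in that uniformization. A single $\varepsilon$-net with a union bound and a Lipschitz transfer (applied, moreover, to the $pq$-th power sums, whose Bernstein range and variance proxies $M^{pq}$ and $M^{pq}\mathbb{E}|g|^{pq}$ already have a different shape) yields a failure probability of the form $2\,\mathcal{N}(V^{p,q,*}_{N}(\Phi),\varepsilon)\,e^{-nm\beta}$ with constants that differ from, and need not dominate, the prescribed ones. The stated constants are tied to a specific net-plus-chaining proof of Lemma \ref{lemm3.3}: $\mathcal{A}_1=2\exp\big(r(2N+1)^{(d+1)}\ln(4c^{*}+1)\big)$ is twice the covering number of Lemma \ref{lem2} at radius $\tfrac12$, while $\mathcal{A}_2$, with its product $(2c^{*}+\tfrac14)(c^{*}+\tfrac14)$ and the factor $(\ln 2)^{2}$, and the numbers $81$, $18\sqrt{2}$ and $54r\sqrt{2}\ln 2$ appearing in $\beta_2$ and in \eqref{eqnm}, come from a dyadic chaining over scales $2^{-l}$ inside the $\tfrac12$-balls; the hypothesis \eqref{eqnm} is nothing but the admissibility threshold on $\lambda$ in Lemma \ref{lemm3.3}. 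That chaining argument is the substantive content pinning down $\beta_1$, $\beta_2$ and \eqref{eqnm}, so it cannot be deferred as bookkeeping: as sketched, you would prove a theorem of the same shape but not the stated one. Two smaller remarks: your scheme uses two separate Bernstein events (lower tail for $S$, upper tail for the first-power sums), each requiring its own net, which doubles the combinatorial loss the paper avoids; and the homogeneity reduction you open with carries the same caveat as the paper's own statement — $V^{p,q}_{N,\omega,\psi}(\Phi)$ is not a cone, since the constraint $\|f\ast\psi\|_{L^{p,q}(C_K)}\ge\omega$ does not rescale, so what is actually established is the inequality on $V^{p,q,*}_{N,\omega,\psi}(\Phi)$ and its positive multiples.
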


\bigskip
\noindent
In order to prove Theorem \ref{thm3.5}, we consider for $f \in V^{p,q}(\Phi)$ and a sequence of i.i.d. random variables  $\{(x_{j}, y_{k})\}_{j,k \in \mathbb{N}}$  that are drawn from a general probability distribution over $C_{K}$ whose density function is $\rho,$  the random variables $ Y_{j,k}(f), j,k \in \mathbb{N}$, defined by \begin{equation}
Y_{j,k}(f) := |(f \ast \psi)(x_{j}, y_{k})| -   \int_{C_{K}}\rho(x,y)|(f \ast \psi)(x, y)| dx dy \label{3eq2}
\end{equation}
and a few of its properties, namely its expectation, variance and certain norm estimates. The proofs of these properties and the subsequent analysis require the following variants of Young's inequality.
\begin{lem}
Let $1 \leq p,q,r \leq \infty $ with $\frac{1}{p}+ \frac{1}{q} = \frac{1}{r} +1$. Suppose $f \in L^{p}(\mathbb{R}^{d}), g \in L^{q}(\mathbb{R}^{d})$ with $supp(g) \subset C_{K}$, where $C_{K}$ denotes the cube $[-K, K]^{d}.$ Then,
 \begin{equation}
\|f*g\|_{L^{r}(C_{K})} \leq \|f\|_{L^{p}(C_{2K})} \|g\|_{L^{q}(C_{K})}. \label{3eq3}
\end{equation}  \label{young1}
\end{lem}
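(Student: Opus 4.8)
The plan is to reduce this localized estimate to the classical Young convolution inequality on all of $\mathbb{R}^{d}$ via a truncation argument that exploits the compact support of $g$. First I would record the key geometric observation: since supp$(g) \subset C_{K}$, for every $x$ the convolution can be written as $(f*g)(x) = \int_{C_{K}} f(x-t) g(t)\, dt$, so only the values $f(x-t)$ with $t \in C_{K}$ contribute. When in addition $x \in C_{K}$, each coordinate satisfies $|x_{i} - t_{i}| \leq |x_{i}| + |t_{i}| \leq 2K$, hence $x - t \in C_{2K}$. Consequently the restriction of $f*g$ to $C_{K}$ depends only on the restriction of $f$ to $C_{2K}$.

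Next I would make this precise by truncating: set $\tilde{f} := f \, \chi_{C_{2K}}$, where $\chi_{C_{2K}}$ denotes the indicator function of $C_{2K}$. By the observation above, $(f*g)(x) = (\tilde{f} * g)(x)$ for every $x \in C_{K}$. I would then apply the classical Young inequality to $\tilde{f} \in L^{p}(\mathbb{R}^{d})$ and $g \in L^{q}(\mathbb{R}^{d})$: under the hypothesis $\frac{1}{p} + \frac{1}{q} = \frac{1}{r} + 1$, this yields $\|\tilde{f} * g\|_{L^{r}(\mathbb{R}^{d})} \leq \|\tilde{f}\|_{L^{p}(\mathbb{R}^{d})} \|g\|_{L^{q}(\mathbb{R}^{d})}$. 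Since $\|\tilde{f}\|_{L^{p}(\mathbb{R}^{d})} = \|f\|_{L^{p}(C_{2K})}$ and $\|g\|_{L^{q}(\mathbb{R}^{d})} = \|g\|_{L^{q}(C_{K})}$ (the latter because $g$ is supported in $C_{K}$), and since restricting the domain of integration to $C_{K}$ can only decrease the $L^{r}$ norm, I would conclude
\begin{equation*}
\|f*g\|_{L^{r}(C_{K})} = \|\tilde{f} * g\|_{L^{r}(C_{K})} \leq \|\tilde{f} * g\|_{L^{r}(\mathbb{R}^{d})} \leq \|f\|_{L^{p}(C_{2K})} \|g\|_{L^{q}(C_{K})}.
\end{equation*}

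There is no genuinely hard step here: the entire content lies in the support-driven localization that replaces $f$ by its truncation $\tilde{f}$, after which the inequality is immediate from the global Young inequality. The only minor care needed is to treat the endpoint exponents $p, q, r \in \{1, \infty\}$, where both the classical Young inequality and the identity $\|\tilde{f}\|_{L^{p}(\mathbb{R}^{d})} = \|f\|_{L^{p}(C_{2K})}$ continue to hold by the same reasoning, so that the stated estimate remains valid across the full admissible range of $p, q, r$.
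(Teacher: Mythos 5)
Your proof is correct, but it takes a different route from the paper's. You reduce the localized estimate to the classical global Young inequality: the observation that $x,t\in C_{K}$ forces $x-t\in C_{2K}$ lets you replace $f$ by its truncation $\tilde{f}=f\,\chi_{C_{2K}}$ without changing $f*g$ on $C_{K}$, after which the global inequality $\|\tilde{f}*g\|_{L^{r}(\mathbb{R}^{d})}\leq\|\tilde{f}\|_{L^{p}(\mathbb{R}^{d})}\|g\|_{L^{q}(\mathbb{R}^{d})}$ finishes the job; every step, including the endpoint cases, is sound. The paper instead reproves Young's inequality from scratch in the localized setting: it splits the integrand as $\bigl(|f(x-y)|^{p}|g(y)|^{q}\bigr)^{1/r}|f(x-y)|^{1-p/r}|g(y)|^{1-q/r}$, applies the three-factor H\"older inequality with exponents $r,q',p'$, and tracks throughout that the inner integration runs over $C_{K}$ so that only $\|f\|_{L^{p}(C_{2K})}$ ever appears. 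Your argument is shorter and more modular, since it treats the classical inequality as a black box; the paper's argument is self-contained and its explicit domain bookkeeping is the same technique it then leans on (together with Minkowski's integral inequality) when proving the mixed-norm version in the next lemma, so the redundancy there has some expository purpose.
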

\begin{proof}
Let $1 <p,q < \infty$  and $p',q'$ be their conjugate exponents respectively.  
Consider for $x \in C_{K},$
\begin{eqnarray*}
&&|(f*g)(x)|\\&& \quad  \leq \int_{\mathbb{R}^{d}}|f(x-y)||g(y)|dy \\
 && \quad = \int_{C_{K}}|f(x-y)||g(y)|dy  \\
 && \quad = \int_{C_{K}}|f(x-y)|^{\frac{p}{r}} |f(x-y)|^{(1- \frac{p}{r})}|g(y)|^{\frac{q}{r}} |g(y)|^{(1- \frac{q}{r})}dy \\
 && \quad= \int_{C_{K}} \big( |f(x-y)|^{p} |g(y)|^{q}   \big) ^{\frac{1}{r}} |f(x-y)|^{(1- \frac{p}{r})}|g(y)|^{(1- \frac{q}{r})} dy. \\ 
 \end{eqnarray*}
 As $\frac{1}{p'}+ \frac{1}{q'} + \frac{1}{r}=1,$ we get 
  \begin{eqnarray*}
  &&|(f*g)(x)|\\ && \quad \leq  \bigg ( \int_{C_{K}} \bigg|\big( |f(x-y)|^{p} |g(y)|^{q}   \big)^{\frac{1}{r}}\bigg| ^{r}dy \bigg )^{\frac{1}{r}} \bigg( \int_{C_{K}} \big ( |f(x-y)|^{(1- \frac{p}{r})}\big )^{q'}dy \bigg)^{\frac{1}{q'}} \\    
 && \quad \quad \quad \quad \quad \times \bigg( \int_{C_{K}}\big(|g(y)|^{(1- \frac{q}{r})} \big )^{p'} dy \bigg )^{\frac{1}{p'}} ,\\
 \end{eqnarray*}
 by Holder's inequality.
 Now using the relations  $q= p'(1- \frac{q}{r})$ and $p= q'(1- \frac{p}{r}),$ we obtain
 \begin{eqnarray*}
 &&|(f*g)(x)| \\
 &&  \leq \bigg( \int_{C_{K}}  |f(x-y)|^{p} |g(y)|^{q}   dy \bigg )^{\frac{1}{r}} \bigg( \int_{C_{K}} |f(x-y)|^{p}dy \bigg)^{\frac{1}{q'}}  \bigg( \int_{C_{K}} |g(y)|^{q}dy \bigg)^{\frac{1}{p'}}\\
 && \leq \bigg( \int_{C_{K}}  |f(x-y)|^{p} |g(y)|^{q}   dy \bigg )^{\frac{1}{r}}  \|f\|^{\frac{p}{q'}}_{L^{p}(C_{2K})} \|g\|^{\frac{q}{p'}}_{L^{q}(C_{K})}\\
 &&  \leq  \bigg( \int_{C_{K}}  |f(x-y)|^{p} |g(y)|^{q}   dy \bigg )^{\frac{1}{r}}  \|f\|^{(1-\frac{p}{r})}_{L^{p}(C_{2K})} \|g\|^{(1-\frac{q}{r})}_{L^{q}(C_{K})} ,
\end{eqnarray*} 
which implies
 \begin{eqnarray*}
&& \int_{C_{K}}|(f*g)(x)|^{r}dx \\
&& \leq \|f\|^{r-p}_{L^{p}(C_{2K})} \|g\|^{r-q}_{L^{q}(C_{K})} \int_{C_{K}}\bigg( \int_{C_{K}}  |f(x-y)|^{p} |g(y)|^{q}   dy \bigg )dx \\
&& = \|f\|^{r-p}_{L^{p}(C_{2K})} \|g\|^{r-q}_{L^{q}(C_{K})} \int_{C_{K}} |g(y)|^{q} \bigg( \int_{C_{K}}  |f(x-y)|^{p} dx   \bigg )dy \\
&& \leq \|f\|^{r}_{L^{p}(C_{2K})} \|g\|^{r-q}_{L^{q}(C_{K})} \int_{C_{K}} |g(y)|^{q} dy\\
&& =  \|f\|^{r}_{L^{p}(C_{2K})} \|g\|^{r}_{L^{q}(C_{K})},
\end{eqnarray*}  
thereby proving the desired inequality \eqref{3eq3}. The proof of \eqref{3eq3} for the other cases is obvious. 
\end{proof}

\noindent
The version of Young's inequality for mixed Lebesgue spaces is given in \cite{RuiBei}. The analogous result for the compact subset $C_{K} = [-K_{1}, K_{1}] \times [-K_{2}, K_{2}]^{d}$ is given below.

\begin{lem}
 Let $1<p,q< \infty$, $f \in L^{p,q}(\mathbb{R} \times \mathbb{R}^{d})$ and $g \in L^{1}(\mathbb{R}^{d+1})$ with supp$(g) \subset C_{K}$. Then,  we have
\begin{equation}
\|f*g\|_{L^{p,q}(C_{K})} \leq \|f\|_{L^{p,q}(C_{2K})}\|g\|_{L^{1,1}(C_{K})}.  \label{3eq4}
\end{equation}
Also \begin{equation}
\|f*g\|_{L^{\infty, \infty}(C_{K})} \leq \|f\|_{L^{\infty, \infty}(C_{2K})}\|g\|_{L^{1,1}(C_{K})}. \label{0eqn} 
\end{equation}
   \label{0lemma}
\end{lem}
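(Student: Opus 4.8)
The plan is to prove the mixed-norm Young inequality \eqref{3eq4} by reducing it to the single-variable version \eqref{3eq3} already established in Lemma \ref{young1}, applied iteratively in the two groups of variables. Write $g$ with $\mathrm{supp}(g)\subseteq C_K=[-K_1,K_1]\times[-K_2,K_2]^d$ and compute $(f*g)(x,y)$ as an integral over $C_K$ in the convolution variables $(u,v)$. The key structural observation is that the mixed norm $\|\cdot\|_{L^{p,q}(C_K)}$ is the $L^p$ norm in $x$ of the $L^q$ norm in $y$, so I would first control the inner $y$-integral for fixed $x$, and then the outer $x$-integral.

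First I would fix the $x$-variable and view the convolution as a partial convolution in the $y$-variable. For fixed $x$ and fixed $u\in[-K_1,K_1]$, the slice $v\mapsto g(u,v)$ is supported in $[-K_2,K_2]^d$, so applying Lemma \ref{young1} in dimension $d$ with the exponent triple $(q,1,q)$ (which satisfies $\frac1q+\frac11=\frac1q+1$) gives a pointwise-in-$x$ bound of the inner $L^q_y(C_{K_2})$ norm by a product of an $L^q_y(C_{2K_2})$ norm of a translate of $f$ and an $L^1_v$ norm of $g$. The cleanest route, though, is to treat the full $(d+1)$-dimensional convolution directly and split the Hölder step across the two variable groups: write the integrand as $\bigl(|f(x-u,y-v)|^{?}|g(u,v)|\bigr)$ raised to appropriate powers and apply Hölder in $v$ first (to build the $q$-norm), then Minkowski's integral inequality in $u$, and finally Hölder in $u$ (to build the $p$-norm), exactly paralleling the single-variable computation in Lemma \ref{young1} but carried out in the mixed-norm lattice.

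Concretely, I would estimate $\|f*g\|_{L^{p,q}(C_K)}$ by first bounding, for each fixed $u$, the map $(x,y)\mapsto |f(x-u,y-v)|$ translated by $g(u,v)$, using Minkowski's integral inequality to pull the $L^{p,q}(C_K)$ norm inside the $(u,v)$-integral:
\begin{equation*}
\|f*g\|_{L^{p,q}(C_K)} \le \int_{C_K} |g(u,v)|\,\bigl\|f(\cdot-u,\cdot-v)\bigr\|_{L^{p,q}(C_K)}\,du\,dv.
\end{equation*}
Since $(u,v)\in C_K$, each translate $f(\cdot-u,\cdot-v)$ restricted to $C_K$ is a restriction of $f$ to a set contained in $C_{2K}=[-2K_1,2K_1]\times[-2K_2,2K_2]^d$, so $\bigl\|f(\cdot-u,\cdot-v)\bigr\|_{L^{p,q}(C_K)}\le \|f\|_{L^{p,q}(C_{2K})}$ uniformly in $(u,v)$. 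Pulling this constant out of the integral leaves exactly $\|f\|_{L^{p,q}(C_{2K})}\int_{C_K}|g(u,v)|\,du\,dv=\|f\|_{L^{p,q}(C_{2K})}\|g\|_{L^{1,1}(C_K)}$, which is \eqref{3eq4}. The $L^{\infty,\infty}$ estimate \eqref{0eqn} follows by the same translate-and-bound argument with the essential supremum in place of the mixed integral norm.

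The main obstacle I anticipate is justifying the Minkowski integral inequality step in the genuinely mixed-norm setting: the norm $\|\cdot\|_{L^{p,q}}$ is not an ordinary $L^s$ norm, so one must verify that it still obeys the integral (continuous) triangle inequality $\bigl\|\int F_{(u,v)}\,du\,dv\bigr\|_{L^{p,q}}\le\int\|F_{(u,v)}\|_{L^{p,q}}\,du\,dv$. This is where I would be careful: one applies Minkowski's inequality first in the inner $L^q_y$ variable and then in the outer $L^p_x$ variable, using that for $p,q\ge 1$ both are norms, so the nested application is legitimate; the compact support of $g$ guarantees all integrals are finite and the translate bound by $C_{2K}$ is uniform. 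Everything else — the support bookkeeping that sends $C_K$-translates into $C_{2K}$ and the direct evaluation of the $L^{1,1}$ norm of $g$ — is routine.
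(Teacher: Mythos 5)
Your proof is correct, but it takes a genuinely different route from the paper's. The paper proves \eqref{3eq4} by slicing: with $f_x(y)=f(x,y)$ and $g_x(y)=g(x,y)$ it applies Minkowski's integral inequality in the $x'$-variable together with the $d$-dimensional, $(q,1,q)$ case of Lemma \ref{young1} to control the inner $L^q$ norm, reduces everything to the one-dimensional convolution $\tilde f\ast\tilde g$ of the slice norms $\tilde f(x)=\|f_x\|_{L^{q}([-2K_2,2K_2]^d)}$ and $\tilde g(x)=\|g_x\|_{L^{1}([-K_2,K_2]^d)}$, and then invokes Lemma \ref{young1} once more with exponents $(p,1,p)$. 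You instead treat the $(d+1)$-dimensional convolution in one stroke: a nested Minkowski integral inequality (first in $L^q_y$, then in $L^p_x$, exactly as you justify in your last paragraph) pulls the mixed norm inside the $(u,v)$-integral, and the uniform translate bound $\|f(\cdot-u,\cdot-v)\|_{L^{p,q}(C_K)}\le\|f\|_{L^{p,q}(C_{2K})}$ --- valid because $C_K-(u,v)$ is again a product set, contained factorwise in $C_{2K}$ when $(u,v)\in C_K$ --- finishes the argument after integrating out $|g|$. Your route is shorter, avoids H\"older's inequality entirely (the paper's Lemma \ref{young1} rests on a three-factor H\"older splitting), makes transparent that the $L^1$-convolutor case of Young's inequality is nothing more than the continuous Minkowski inequality plus monotonicity of the norm under domain enlargement, and would transfer verbatim to any solid, translation-monotone function norm. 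What the paper's organization buys is reuse of Lemma \ref{young1} as stated for general exponent triples $\frac{1}{p}+\frac{1}{q}=\frac{1}{r}+1$, and it illustrates the standard slicing template for lifting one-variable inequalities to mixed-norm spaces; your argument covers (and needs) only the $L^1$ case. The step you flagged as delicate --- Minkowski in the mixed-norm setting --- is indeed the crux, and your nested justification of it is sound; the same translate-and-bound reasoning disposes of \eqref{0eqn}.
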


\begin{proof}
For a fixed $x \in [-K_{1}, K_{1}],$ consider the functions $f_{x}, g_{x}$ on $[-K_{2}, K_{2}]^{d},$ given by $f_{x}(y)=f(x,y)$ and $g_{x}(y)= g(x,y), y \in [-K_{2}, K_{2}]^{d}.$ \\
\noindent
Then,
\begin{eqnarray*}
&& \|f*g\|^{p}_{L^{p,q}(C_{K})} \\
&& = \int_{[-K_{1}, K_{1}]} \bigg(\int_{[-K_{2}, K_{2}]^{d}}\bigg|\int_{\mathbb{R}} \int_{\mathbb{R}^{d}} f(x-x', y-y')g(x', y')dy' dx' \bigg|^{q} dy \bigg)^{\frac{p}{q}} dx \\
&& = \int_{[-K_{1}, K_{1}]} \bigg(\int_{[-K_{2}, K_{2}]^{d}}\bigg|\int_{\mathbb{R}} (f_{x-x'}*g_{x'})(y) dx' \bigg|^{q} dy \bigg)^{\frac{p}{q}} dx \\
&& = \int_{[-K_{1}, K_{1}]} \bigg(\int_{[-K_{2}, K_{2}]^{d}}\bigg|\int_{[-K_{1}, K_{1}]} (f_{x-x'}*g_{x'})(y) dx' \bigg|^{q} dy \bigg)^{\frac{p}{q}} dx\\
&& = \int_{[-K_{1}, K_{1}]} \bigg \|\int_{[-K_{1}, K_{1}]}(f_{x-x'}* g_{x'})(\cdot)dx' \bigg \|^{p}_{L^{q}([-K_{2}, K_{2}]^{d} )} dx.
\end{eqnarray*}  
\noindent
Applying the Minkowski's integral inequality and Lemma \ref{young1}, we have 
\begin{eqnarray*}&&  \bigg \|\int_{[-K_{1}, K_{1}]}(f_{x-x'}* g_{x'})(\cdot)dx' \bigg \|_{L^{q}([-K_{2}, K_{2}]^{d})} \\
&& \leq \int_{[-K_{1}, K_{1}]}\|(f_{x-x'}* g_{x'})(\cdot) \|_{L^{q}([-K_{2}, K_{2}]^{d})}dx'  \\
&& \leq \int_{[-K_{1}, K_{1}]}\|f_{x-x'}\|_{L^{q}([-2K_{2}, 2K_{2}]^{d})} \|g_{x'} \|_{L^{1}([-K_{2}, K_{2}]^{d})}dx' .
\end{eqnarray*} 
For $x \in [-K_{1}, K_{1}],$ let $\tilde{f}(x)= \|f_{x}\|_{L^{q}([-2K_{2},2 K_{2}]^{d})}$ and $\tilde{g}(x)= \|g_{x}\|_{L^{1}([K_{2}, K_{2}]^{d})}.$ Then supp$(\tilde{g}) \subset [-K_{1}, K_{1}]$ and we have

\begin{eqnarray*}
&& \|f*g\|^{p}_{L^{p,q}(C_{K})} \\
&& \leq  \int_{[-K_{1}, K_{1}]} \bigg (\int_{[-K_{1}, K_{1}]}\|f_{x-x'}\|_{L^{q}([-2K_{2}, 2K_{2}]^{d})} \|g_{x'} \|_{L^{1}([K_{2}, K_{2}]^{d})}dx' \bigg )^{p} dx \\
&& = \int_{[-K_{1}, K_{1}]} \bigg (\int_{[-K_{1}, K_{1}]}\tilde{f}(x-x') \tilde{g}(x') dx' \bigg )^{p} dx \\
&& = \int_{[-K_{1}, K_{1}]} |(\tilde{f}* \tilde{g})(x)|^{p} dx = \|\tilde{f}* \tilde{g}\|^{p}_{L^{p}([-K_{1}, K_{1}])} .
\end{eqnarray*}
Appealing to Lemma \ref{young1} once again, we get 
\begin{equation}
 \|f*g\|^{p}_{L^{p,q}(C_{K})} \leq \|\tilde{f}\|^{p}_{L^{p}([-2K_{1}, 2K_{1}])} \| \tilde{g}\|^{p}_{L^{1}([-K_{1}, K_{1}])}. \label{eqn*}
\end{equation}

\noindent
Further we observe that 
\begin{eqnarray*}
\|\tilde{f}\|^{p}_{L^{p}([-2K_{1}, 2K_{1}])} 
&=&\int_{[-2K_{1}, 2K_{1}]} \|f_{x}\|^{p}_{L^{q}([-2K_{2}, 2K_{2}]^{d})} dx\\ &=& \int_{[-2K_{1},2 K_{1}]} \bigg(\int_{[-2K_{2}, 2K_{2}]^{d}}|f(x,y)|^{q} dy \bigg )^{\frac{p}{q}} dx\\
&=& \|f\|^{p}_{L^{p,q}(C_{2K})}.
\end{eqnarray*} 
Similarly,
\begin{eqnarray*}
\|\tilde{g}\|^{p}_{L^{1}([-K_{1}, K_{1}])} 
&=& \bigg ( \int_{[-K_{1}, K_{1}]} \|g_{x}\|_{L^{1}([-K_{2}, K_{2}]^{d})} dx \bigg )^{p}\\ &=& \bigg( \int_{[-K_{1}, K_{1}]} \int_{[-K_{2}, K_{2}]^{d}}|g(x,y)| dy  dx \bigg )^{p}\\
&=& \|g\|^{p}_{L^{1,1}(C_{K})}.
\end{eqnarray*}
From \eqref{eqn*}, it then follows that$$\|f*g\|_{L^{p,q}(C_{K})} \leq \|f\|_{L^{p,q}(C_{2K})} \|g\|_{L^{1,1}(C_{K})}. $$
The proof of \eqref{0eqn} is obvious.
\end{proof}

\noindent
Now we shall look into the properties satisfied by the random variables ${Y_{j,k}(f)}$ defined in \eqref{3eq2}. 
\begin{flalign}
(1) & \ \text{The expectation} \  \mathbb{E}[Y_{j,k}(f)]= 0.  & \label{3eq5}
\end{flalign}
\begin{flalign*}
(2) & \ \|Y_{j,k}(f)\|_{l^{\infty, \infty}} \leq \|f\|_{L^{\infty, \infty}(C_{2K})} \|\psi\|_{L^{1,1}(C_{K})}.
&  \end{flalign*}
\noindent
$Proof$. As  $ \displaystyle \int_{C_{K}} \rho(x,y)|(f*\psi)(x,y)|dx dy \leq \|f*\psi\|_{L^{\infty, \infty}(C_{K})},$ we have \begin{eqnarray*}
&& \|Y_{j,k}(f)\|_{l^{\infty, \infty}} \\
 && \quad = \sup_{j,k}\bigg| \left|(f \ast \psi)(x_{j}, y_{k})\right| -  \int_{C_{K}} \rho(x,y)|(f*\psi)(x,y)|dx dy  \bigg|\\
 && \quad \leq \sup_{j,k} \max\bigg\{ \left|(f \ast \psi)(x_{j}, y_{k}) \right| ,  \int_{C_{K}} \rho(x,y)|(f*\psi)(x,y)|dx dy \bigg\}\\
&&  \quad \leq  \|f \ast \psi\|_{L^{\infty, \infty}(C_{K})}\\
&&  \quad \leq  \|f \|_{L^{\infty, \infty}(C_{2K})} \|\psi\|_{L^{1,1}(C_{K})},
\end{eqnarray*}
by \eqref{0eqn}.

\begin{flalign}
(3)  &\ \|Y_{j,k}(f)- Y_{j,k}(g)\|_{l^{\infty, \infty}} \leq 2 \|f-g\|_{L^{\infty, \infty}(C_{2K})} \|\psi\|_{L^{1,1}(C_{K})}. & \label{3eq7} 
 \end{flalign}
\noindent
$Proof$. Consider, 
\begin{eqnarray*}
&& \|Y_{j,k}(f)- Y_{j,k}(g)\|_{l^{\infty, \infty}}\\
&& \quad \leq  \sup_{j,k}\bigg| |((f-g)\ast \psi)(x_{j}, y_{k})| +   \int_{C_{K}} \rho(x,y)|((f-g)*\psi)(x,y) | dx dy \bigg|\\&& 
 \quad \leq  \sup_{(s,t) \in C_{K}}\bigg( | ((f-g) \ast \psi )(s, t)| + \|(f-g) \ast \psi\|_{L^{\infty, \infty}(C_{K})}  \int_{C_{K}} \rho(x,y)dx dy \bigg )\\
&& \quad \leq 2\|(f-g) \ast \psi\|_{L^{\infty, \infty}(C_{K})}\\
&& \quad \leq 2\|f -g\|_{L^{\infty, \infty}(C_{2K})} \|\psi\|_{L^{1,1}(C_{K})},
\end{eqnarray*}
by \eqref{0eqn}.

\begin{flalign*}
(4) &\ Var(Y_{j,k}(f)) \leq \|f\|^{2}_{L^{\infty, \infty}(C_{2K})} \|\psi\|^{2}_{L^{1,1}(C_{K})}.& 
\end{flalign*}
\noindent
$Proof$. It follows from \eqref{3eq5} that $Var(Y_{j,k}(f))= \mathbb{E}[Y_{j,k}(f)^{2}]. $
Now \begin{eqnarray*} \mathbb{E}[(Y_{j,k}(f))^{2}] &=& \mathbb{E}\left[|(f \ast \psi)(x_{j}, y_{k})|^{2} \right] + \mathbb{E}\left[\bigg( \int_{C_{K}} \rho(x,y)|(f*\psi)(x,y)|dx dy \bigg )^{2} \right ] \\
&& \quad  \quad - 2 \mathbb{E}\left[|(f \ast \psi)(x_{j}, y_{k})| \bigg ( \int_{C_{K}} \rho(x,y)|(f*\psi)(x,y)|dx dy \bigg ) \right ] \\
&  = & \mathbb{E}\left[|(f \ast \psi)(x_{j}, y_{k})|^{2} \right]- \bigg( \int_{C_{K}} \rho(x,y)|(f*\psi)(x,y)|dx dy \bigg)^{2},
\end{eqnarray*}
 which leads to the inequality 
\begin{eqnarray*}
Var(Y_{j,k}(f)) &\leq& \ \mathbb{E}[|(f \ast \psi)(x_{j}, y_{k})|^{2}]\\
&\leq& \|f \ast \psi\|^{2}_{L^{\infty, \infty}(C_{K})}\\
&\leq&   \|f\|^{2}_{L^{\infty, \infty}(C_{2K})} \|\psi\|^{2}_{L^{1,1}(C_{K})},
\end{eqnarray*} 
by \eqref{0eqn}.

\begin{flalign*}
(5) &\ Var \big(Y_{j,k}(f)-Y_{j,k}(g) \big ) \leq 4 \|f-g\|^{2}_{L^{\infty, \infty}(C_{2K})} \|\psi\|^{2}_{L^{1,1}(C_{K})} .& 
\end{flalign*}
\noindent
$Proof$.
 \begin{eqnarray*} && Var \big(Y_{j,k}(f)-Y_{j,k}(g) \big ) \\
 && \quad  = \mathbb{E}\big[\big(Y_{j,k}(f)-Y_{j,k}(g) \big )^{2}\big] \\
 && \quad \leq \mathbb{E}\left [\bigg(\left|\big( (f-g) \ast \psi \big )(x_{j}, y_{k}) \right| +  \int_{C_{K}}\rho(x,y) \left|\big((f-g)*\psi \big)(x,y)\right | dx dy \bigg)^{2} \right] \\
  && \quad = \mathbb{E}\big[|\big( (f-g) \ast \psi \big )(x_{j}, y_{k})|^{2}\big] +\bigg(  \int_{C_{K}}\rho(x,y) |\big((f-g)*\psi \big)(x,y)| dx dy\bigg)^{2}\\
 && \quad \quad \quad  2 \bigg(  \int_{C_{K}}\rho(x,y) |\big((f-g)*\psi \big)(x,y)| dx dy\bigg)\mathbb{E}\big[|\big( (f-g) \ast \psi \big )(x_{j}, y_{k})|\big]\\
&&  \quad \leq 4\|(f-g) \ast \psi\|^{2}_{L^{\infty, \infty}(C_{K})}\\
&& \quad \leq 4  \|f-g\|^{2}_{L^{\infty, \infty}(C_{2K})} \|\psi\|^{2}_{L^{1,1}(C_{K})},
\end{eqnarray*}
by \eqref{0eqn}.

\bigskip
\noindent
To proceed further, we shall consider the following lemmas. The relation between the $L^{\infty, \infty}$ and $L^{p,q}$ norms of functions in $V_{N}^{p,q}(\Phi)$ is given below.

\begin{lem} \cite{jiangli3}
Suppose that $\Phi$ satisfies \eqref{2eq1} and \eqref{2eq2}. Then for every function $f \in V_{N}^{p,q}(\Phi),$ we have 
$$\|f\|_{L^{\infty, \infty} (\mathbb{R}\times \mathbb{R}^{d})} \leq c'\|f\|_{L^{p,q}(\mathbb{R}\times \mathbb{R}^{d})},$$ 
where 
$\displaystyle c' = \dfrac{2^{\left( \frac{1}{p'} + \frac{1}{q'} \right)} \tilde{c}}{\alpha_{1}} \left(\sum_{k_{1} \in \mathbb{Z}} \dfrac{1}{\left( 1 + |k_{1}| \right )^{ s_{1} p'}}\right)^{ \frac{1}{p'} } \left(\sum_{k_{2} \in \mathbb{Z}^{d}} \dfrac{1}{ \left( 1 + |k_{2}| \right )^{ s_{2} q'}}\right)^{ \frac{1}{q'} } $
 and $p',q'$ denote the conjugate exponents of $p$ and $q$ respectively.\label{lem1}
\end{lem}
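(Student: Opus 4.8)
The plan is to bound $|f(x,y)|$ pointwise and uniformly in $(x,y)$ by $\|f\|_{L^{p,q}}$, exploiting the polynomial decay \eqref{2eq2} of the generators together with the lower bound in \eqref{2eq1}. Writing $f = \sum_{i=1}^{r}\sum_{|k_{1}|\le N}\sum_{|k_{2}|\le N} c_{i}(k_{1},k_{2})\,\phi_{i}(\cdot - k_{1},\cdot - k_{2})$ with $\mathbf{c}\in (l^{p,q})^{r}$, the triangle inequality and \eqref{2eq2} give
$$|f(x,y)| \le \tilde c \sum_{i=1}^{r} \sum_{k_{1}}\sum_{k_{2}} \frac{|c_{i}(k_{1},k_{2})|}{(1+|x-k_{1}|)^{s_{1}}(1+|y-k_{2}|)^{s_{2}}}.$$

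First I would split each summand and apply H\"older's inequality twice: once in $k_{2}$ with exponents $q,q'$, and then in $k_{1}$ with exponents $p,p'$. The inner sum over $k_{2}$ yields $\big(\sum_{k_{2}}|c_{i}(k_{1},k_{2})|^{q}\big)^{1/q}$ times $\tilde c\,(1+|x-k_{1}|)^{-s_{1}}\big(\sum_{k_{2}}(1+|y-k_{2}|)^{-s_{2}q'}\big)^{1/q'}$; summing over $k_{1}$ by H\"older then produces the coefficient factor $\|c_{i}\|_{l^{p,q}}$ and the purely geometric factor $\tilde c\,\big(\sum_{k_{1}}(1+|x-k_{1}|)^{-s_{1}p'}\big)^{1/p'}\big(\sum_{k_{2}}(1+|y-k_{2}|)^{-s_{2}q'}\big)^{1/q'}$, the $k_{2}$--sum coming out of the $k_{1}$--sum since it is independent of $k_{1}$. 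This cleanly separates the coefficient norm from the lattice sums.

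Next I would bound these two shifted decay sums uniformly over $(x,y)$ by the lattice sums appearing in $c'$. Writing $x=n+s$ with $n\in\mathbb{Z}$, $s\in[0,1)$ and reindexing $m=k_{1}-n$, a term-by-term comparison gives $\sum_{k_{1}}(1+|x-k_{1}|)^{-s_{1}p'}\le 2\sum_{k_{1}}(1+|k_{1}|)^{-s_{1}p'}$, and analogously, with the max-norm on $\mathbb{Z}^{d}$, $\sum_{k_{2}}(1+|y-k_{2}|)^{-s_{2}q'}\le 2\sum_{k_{2}}(1+|k_{2}|)^{-s_{2}q'}$; taking $1/p'$ and $1/q'$ powers yields the factor $2^{1/p'+1/q'}$. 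Convergence of both lattice sums is guaranteed by $s_{1},s_{2}>d+1-\tfrac1p-\tfrac dq$, which forces $s_{1}p'>1$ and $s_{2}q'>d$. Taking the supremum over $(x,y)$ and summing over the $r$ generators, using $\|\mathbf{c}\|_{l^{p,q}}=\sum_{i}\|c_{i}\|_{l^{p,q}}$, gives $\|f\|_{L^{\infty,\infty}}\le 2^{1/p'+1/q'}\tilde c\,\big(\sum_{k_{1}}(1+|k_{1}|)^{-s_{1}p'}\big)^{1/p'}\big(\sum_{k_{2}}(1+|k_{2}|)^{-s_{2}q'}\big)^{1/q'}\|\mathbf{c}\|_{l^{p,q}}$. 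Finally, the lower inequality in \eqref{2eq1} gives $\|\mathbf{c}\|_{l^{p,q}}\le \alpha_{1}^{-1}\|f\|_{L^{p,q}}$, and substituting this produces exactly the constant $c'$.

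The main obstacle is the uniform lattice-sum estimate: obtaining the clean factor $2$ (rather than a crude $2^{s_{1}p'}$ or $2^{d}$) requires the reindexing/shift argument rather than a naive pointwise comparison of the form $1+|x-k|\ge\tfrac12(1+|k|)$, and the multidimensional max-norm case needs slightly more care than the one-dimensional one. Everything else is a bookkeeping of H\"older exponents, with the decay hypothesis serving precisely to make the two lattice sums finite.
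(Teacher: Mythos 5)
Your overall architecture is the natural one (and, as far as the statement suggests, the intended one — note this paper gives no proof of the lemma at all, it is quoted from \cite{jiangli3}, so there is no in-paper proof to compare against): triangle inequality plus the decay \eqref{2eq2}, H\"older in $k_2$ with exponents $(q,q')$ and then in $k_1$ with $(p,p')$, a uniform bound on the two shifted lattice sums, and finally the lower bound in \eqref{2eq1} to replace $\|\mathbf{c}\|_{l^{p,q}}=\sum_i\|c_i\|_{l^{p,q}}$ by $\alpha_1^{-1}\|f\|_{L^{p,q}}$. The H\"older bookkeeping is correct, the convergence conditions $s_1p'>1$ and $s_2q'>d$ do follow from the hypothesis on $s_1,s_2$, and your one-dimensional estimate $\sum_{k_1}(1+|x-k_1|)^{-s_1p'}\le 2\sum_{k_1}(1+|k_1|)^{-s_1p'}$ is correct: splitting at the integer part of $x$ and shifting each half-lattice by one gives multiplicity at most $2$.

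The multidimensional step, however, is a genuine gap: the claimed inequality $\sum_{k_2\in\mathbb{Z}^d}(1+|y-k_2|)^{-\sigma}\le 2\sum_{k_2\in\mathbb{Z}^d}(1+|k_2|)^{-\sigma}$ (max norm, $\sigma=s_2q'>d$) is false in general. Take $y=(\tfrac12,\dots,\tfrac12)$ and $\sigma=\tfrac{3d}{2}$. The shifted sum is at least its innermost shell, $2^d(3/2)^{-3d/2}=\bigl(2(2/3)^{3/2}\bigr)^d\approx(1.0887)^d$, while the unshifted sum, $1+\sum_{j\ge1}\bigl((2j+1)^d-(2j-1)^d\bigr)(1+j)^{-3d/2}$, is dominated by its $j=1$ term and is of size $\bigl(3\cdot2^{-3/2}\bigr)^d\approx(1.0607)^d$; the ratio therefore grows like $(1.026)^d$ and exceeds $2$ for $d$ of the order of a few tens (for instance $d=40$, $q=2$, $s_2=30$ — admissible under $(A_1)$ — gives a ratio of about $2.7$). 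The structural reason your reindexing does not survive in several variables is that splitting $\mathbb{Z}^d$ into the $2^d$ orthants determined by $y$ and shifting each one maps the $2^d$ lattice points nearest to $y$ essentially onto the single point $0$, so the multiplicity of the comparison is $2^{\#\{\text{zero coordinates}\}}$, which can be $2^d$, not $2$. What your argument does prove is the lemma with $2^{1/p'+1/q'}$ replaced by a dimension-dependent constant: $2^{1/p'+d/q'}$ via the orthant count just described, or $2^{1/p'}2^{s_2}$ via the cruder bound $1+|y-k_2|\ge\tfrac12(1+|k_2-n|)$ with $n$ the lattice point nearest to $y$. With either replacement the rest of your proof goes through verbatim, but the constant $c'$ as stated in the lemma (and in \cite{jiangli3}) is not recovered by this route for $d\ge2$.
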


\noindent
As the sampling inequalities are given in a probabilistic sense, it is vital to consider the notion of covering numbers which help in the estimation of the probability.\\
For a compact set $C$ in a metric space, its covering number $\mathcal{N}(C, \epsilon), \epsilon > 0$ is the least number of balls of radius $\epsilon $ needed to cover $C.$ The following lemma gives an upper bound for the covering number of $V_{N}^{p,q, *}(\Phi).$

\begin{lem} \cite{jiangli3} Suppose that $\Phi$ satisfies \eqref{2eq1} and \eqref{2eq2}. Then the covering number of $V_{N}^{p,q, *}(\Phi)$ with respect to
 $\| \cdot \|_{L^{\infty, \infty}(\mathbb{R}\times \mathbb{R}^{d})}$
  is bounded by
   $$\mathcal{N}(V_{N}^{p,q,*}(\Phi), \epsilon) \leq \exp \left(r(2N+1)^{(d+1)} \ln\left(\frac{2c'}{\epsilon} + 1\right) \right),$$
    where $c'$ as in Lemma  \ref{lem1}. \label{lem2}
\end{lem}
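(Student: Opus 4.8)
The plan is to transfer the covering problem from the function space, where the metric is $\|\cdot\|_{L^{\infty,\infty}(\mathbb{R}\times\mathbb{R}^d)}$, to the finite-dimensional coefficient space $\big(l^{p,q}([-N,N]\times[-N,N]^d)\big)^r$, where a standard volumetric covering bound for balls of known dimension applies. Write $D=r(2N+1)^{(d+1)}$ for the number of coefficients indexing $V_N^{p,q}(\Phi)$, and use the linear parametrization $\textbf{c}\mapsto f=\sum_{|k_1|\le N}\sum_{|k_2|\le N}\textbf{c}^{T}(k_1,k_2)\Phi(\cdot-k_1,\cdot-k_2)$. The first step is to record the norm comparison I will need: by the stable-shift hypothesis \eqref{2eq1}, any $f\in V_N^{p,q,*}(\Phi)$ has coefficients obeying $\|\textbf{c}\|_{l^{p,q}}\le \alpha_1^{-1}$, so the coefficient set corresponding to $V_N^{p,q,*}(\Phi)$ lies inside the ball $B:=\{\textbf{c}:\|\textbf{c}\|_{l^{p,q}}\le \alpha_1^{-1}\}$.

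The key step is to dominate the target metric by the coefficient metric. For $f,g\in V_N^{p,q}(\Phi)$ with coefficient vectors $\textbf{c},\textbf{d}$, the difference $f-g$ again lies in $V_N^{p,q}(\Phi)$, and the pointwise estimate underlying Lemma \ref{lem1} — bounding $|(f-g)(x,y)|$ by $\sum|\textbf{c}(k_1,k_2)-\textbf{d}(k_1,k_2)|\,|\Phi(x-k_1,y-k_2)|$, inserting the decay \eqref{2eq2}, and applying H\"older in each variable — yields a bound of the form $\|f-g\|_{L^{\infty,\infty}}\le (c'\alpha_1)\,\|\textbf{c}-\textbf{d}\|_{l^{p,q}}$, where $c'\alpha_1$ is exactly the constant $c'$ of Lemma \ref{lem1} with its $\alpha_1^{-1}$ factor stripped off (that factor having arisen only in the last conversion to $\|f\|_{L^{p,q}}$ via the lower bound in \eqref{2eq1}). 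Consequently, an $\eta$-net for $B$ in the $l^{p,q}$ metric pushes forward, through the parametrization, to a $(c'\alpha_1\eta)$-net for $V_N^{p,q,*}(\Phi)$ in the $L^{\infty,\infty}$ metric.

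To finish, I would invoke the standard volumetric bound: the ball of radius $R$ in a $D$-dimensional normed space can be covered by at most $(1+2R/\eta)^{D}$ balls of radius $\eta$ (proved by taking a maximal $\eta$-separated packing and comparing the volumes of the disjoint $\eta/2$-balls around its points against the inflated ball of radius $R+\eta/2$). Applying this to $B$ with $R=\alpha_1^{-1}$ and choosing $\eta=\epsilon/(c'\alpha_1)$ so that the image net has radius $\epsilon$, the number of balls is at most
\[
\Big(1+\tfrac{2R}{\eta}\Big)^{D}=\Big(1+\tfrac{2c'}{\epsilon}\Big)^{D}=\exp\!\Big(r(2N+1)^{(d+1)}\ln\big(\tfrac{2c'}{\epsilon}+1\big)\Big),
\]
which is the asserted bound.

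The routine points are two. First, the transported net consists of elements of $V_N^{p,q}(\Phi)$ rather than of the unit sphere itself, which is harmless since the covering number here permits centers anywhere in the ambient space. Second is the bookkeeping of the ambient dimension, where one takes $D=r(2N+1)^{(d+1)}$ to be the number of coefficients indexing the space, in agreement with the stated exponent. The genuine content — and the step I would treat most carefully — is the inequality $\|f-g\|_{L^{\infty,\infty}}\le (c'\alpha_1)\|\textbf{c}-\textbf{d}\|_{l^{p,q}}$: it is precisely the cancellation of the factor $\alpha_1^{-1}$ inside $c'$ against the radius $R=\alpha_1^{-1}$ of the coefficient ball that makes the ratio $2R/\eta$ collapse to $2c'/\epsilon$ and produces the clean final constant.
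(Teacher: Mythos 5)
The paper itself contains no proof of this lemma---it is quoted verbatim from \cite{jiangli3}---so your proposal can only be measured against the standard argument of that reference, and yours is essentially it: parametrize $V_{N}^{p,q}(\Phi)$ by coefficient vectors, use the lower inequality in \eqref{2eq1} to place the coefficients of unit-sphere elements in a ball of radius $\alpha_{1}^{-1}$, transfer the $L^{\infty,\infty}$ metric to the coefficient metric through the pointwise H\"older-plus-decay estimate (whose constant is $c'\alpha_{1}$, i.e.\ $c'$ with its $\alpha_{1}^{-1}$ factor stripped off), and finish with the volumetric bound $(1+2R/\eta)^{D}$ for a ball of radius $R$ in a $D$-dimensional normed space. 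You also correctly isolate the one point where care is needed: invoking Lemma \ref{lem1} as a black box together with the upper bound in \eqref{2eq1} would only give the Lipschitz constant $c'\alpha_{2}$ and hence the worse count $\left(1+ 2c'\alpha_{2}/(\alpha_{1}\epsilon)\right)^{D}$; returning to the pointwise estimate is exactly what makes the ratio collapse to $2c'/\epsilon$.

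One caveat is worth recording. The volume-comparison bound counts \emph{real} dimensions, whereas the paper defines $l^{p,q}$ and $L^{p,q}$ over the complex scalars; for complex coefficients the ball of radius $\alpha_{1}^{-1}$ has real dimension $2r(2N+1)^{(d+1)}$, and your argument then produces the exponent $2r(2N+1)^{(d+1)}$ rather than $r(2N+1)^{(d+1)}$. This is not a repairable gap in the proof but a defect of the statement as quoted: over complex scalars the unit sphere of $V_{N}^{p,q}(\Phi)$ is a manifold of real dimension $2r(2N+1)^{(d+1)}-1$, and since all norms on this finite-dimensional space are equivalent, its covering number grows like $\epsilon^{-(2r(2N+1)^{(d+1)}-1)}$ as $\epsilon\to 0$, which eventually exceeds the stated bound. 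So your derivation is exactly right if the coefficient space is read as real (the setting in which the bound is used downstream), and the honest statement over $\mathbb{C}$ requires the doubled exponent.
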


\noindent
The analogue of the Bernstein's inequality  for multivariate random variables is as follows (see \cite{bennett, jiangli3}).

\begin{lem}
Let $Z_{j,k}(j= 1,2, \dots, n; k=1,2,\dots, m)$ be independent random variables with expected values $\mathbb{E}[Z_{j,k}]=0$ for all $j=1,2, \dots, n$ and $k=1,2, \dots, m.$ Assume that $Var(Z_{j,k}) \leq \sigma^{2}$  and $|Z_{j,k}|\leq M$ almost surely for all $j=1,2, \dots, n$ and $k=1,2, \dots, m.$ Then for any $\lambda \geq 0$, 
$$Prob\left( \bigg|\sum_{j=1}^{n} \sum_{k=1}^{m} Z_{j,k}\bigg| \geq \lambda\right) \leq 2 \exp \left( - \dfrac{\lambda^{2}}{2nm \sigma^{2} + \frac{2}{3} M \lambda} \right).$$ \label{lem3.2}
\end{lem}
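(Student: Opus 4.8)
The plan is to recognize this as the classical Bernstein inequality, since the double indexing plays no essential structural role. First I would relabel the $nm$ independent variables $\{Z_{j,k}\}$ as a single sequence $W_{1}, \dots, W_{nm}$, so that each $W_{i}$ is independent, has mean zero, satisfies $|W_{i}| \leq M$ almost surely and $Var(W_{i}) \leq \sigma^{2}$. The statement then reduces to the standard one-sided tail bound for $S := \sum_{i=1}^{nm} W_{i}$, from which the two-sided version follows by symmetry.

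The engine of the proof is the exponential (Chernoff) bounding method: for any $t > 0$, Markov's inequality applied to $e^{tS}$ gives
$$Prob(S \geq \lambda) \leq e^{-t\lambda}\, \mathbb{E}\big[e^{tS}\big] = e^{-t\lambda}\prod_{i=1}^{nm}\mathbb{E}\big[e^{t W_{i}}\big],$$
where the factorization uses independence. The crux is a uniform bound on each moment generating factor. Expanding the exponential and using $\mathbb{E}[W_{i}] = 0$ together with the moment estimate $\mathbb{E}[|W_{i}|^{k}] \leq Var(W_{i})\, M^{k-2} \leq \sigma^{2} M^{k-2}$ for $k \geq 2$, I would write
$$\mathbb{E}\big[e^{tW_{i}}\big] \leq 1 + \sigma^{2}\sum_{k=2}^{\infty}\frac{t^{k}M^{k-2}}{k!}.$$
Invoking the elementary factorial bound $k! \geq 2\cdot 3^{k-2}$ and summing the resulting geometric series (valid for $tM < 3$), this collapses to
$$\mathbb{E}\big[e^{tW_{i}}\big] \leq 1 + \frac{\sigma^{2}t^{2}}{2(1 - tM/3)} \leq \exp\!\left(\frac{\sigma^{2}t^{2}}{2(1 - tM/3)}\right).$$

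Substituting back yields $Prob(S \geq \lambda) \leq \exp\big(-t\lambda + \tfrac{nm\,\sigma^{2}t^{2}}{2(1 - tM/3)}\big)$, and the final step is to optimize the exponent over $t \in (0, 3/M)$. Choosing $t = \lambda/(nm\,\sigma^{2} + M\lambda/3)$ makes $1 - tM/3 = nm\sigma^{2}/(nm\sigma^{2}+M\lambda/3)$, and a direct substitution produces exactly the exponent $-\lambda^{2}/(2nm\sigma^{2} + \tfrac{2}{3}M\lambda)$. Applying the identical argument to $-S$ (whose summands $-W_{i}$ satisfy the same hypotheses) and combining the two one-sided bounds via a union bound supplies the factor $2$ and the stated inequality. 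I expect the only delicate point to be the moment generating function estimate itself — namely justifying $\mathbb{E}[|W_{i}|^{k}] \leq \sigma^{2}M^{k-2}$ and carrying the constant $1/3$ through the factorial bound so that the denominator emerges precisely as $2nm\sigma^{2} + \tfrac{2}{3}M\lambda$; everything else is mechanical. Since this is exactly the standard Bernstein inequality, one may alternatively simply cite \cite{bennett, jiangli3}.
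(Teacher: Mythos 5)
Your proof is correct, but it is worth noting that the paper does not prove this lemma at all: it is stated as known background, the analogue of Bernstein's inequality for doubly-indexed independent variables, with a citation to \cite{bennett, jiangli3}. So your closing remark --- that one may simply cite those references --- is precisely what the authors do. Your self-contained argument is nevertheless sound and is the classical exponential-moment proof: the relabeling of $\{Z_{j,k}\}$ as $W_{1},\dots,W_{nm}$ is legitimate because the hypotheses (joint independence, mean zero, $|Z_{j,k}|\leq M$ a.s., $Var(Z_{j,k})\leq\sigma^{2}$) are symmetric in the index pair and the double sum is simply a sum of $nm$ variables; the moment bound $\mathbb{E}[|W_{i}|^{k}]\leq \mathbb{E}[W_{i}^{2}]M^{k-2}\leq\sigma^{2}M^{k-2}$ holds since $|W_{i}|^{k}\leq W_{i}^{2}M^{k-2}$ a.s.; the factorial estimate $k!\geq 2\cdot 3^{k-2}$ for $k\geq 2$ checks out by induction; and your choice $t=\lambda/(nm\sigma^{2}+M\lambda/3)$ does satisfy $tM<3$, gives $1-tM/3=nm\sigma^{2}/(nm\sigma^{2}+M\lambda/3)$, and collapses the exponent exactly to $-\lambda^{2}/(2nm\sigma^{2}+\frac{2}{3}M\lambda)$, after which the union bound over $S$ and $-S$ supplies the factor $2$. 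What your route buys is self-containedness and the explicit observation that the multivariate indexing is purely cosmetic; what the paper's route buys is brevity, treating the lemma as standard. One minor point you could make explicit: interchanging expectation with the power series of $e^{tW_{i}}$ is justified by dominated convergence, since $|W_{i}|\leq M$ a.s. gives the integrable bound $e^{t|W_{i}|}\leq e^{tM}$.
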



\noindent
We shall now state and prove a crucial lemma for our analysis.
\begin{lem} Let $\Phi, \psi$ and $\rho$ be as in the hypothesis of Theorem \ref{thm3.5} and $Y_{j,k}$ be as defined in \eqref{3eq2}. Then for any $m, n, N \in \mathbb{N}$ and \\ 
$\lambda >  54 r \sqrt{2} (\ln 2) (2N+1)^{(d+1)} \bigg(1+ \bigg(1+\dfrac{3nm}{2 r \sqrt{2} (\ln 2) (2N+1)^{(d+1)}} \bigg)^{\frac{1}{2}} \bigg )\|\psi\|_{L^{1,1}(C_{K})},$
 the following inequality holds:

 \begin{eqnarray}
 && Prob\left(\sup_{f \in V_{N}^{p,q,*}(\Phi)} \bigg|\sum_{j=1}^{n} \sum_{k=1}^{m} Y_{j,k}(f)\bigg|
  \geq \lambda \right) \nonumber \\  
    && \quad \leq \mathcal{A}_{1} \exp \left( - \dfrac{\lambda^{2}}{4c^{*}\|\psi\|_{L^{1,1}(C_{K})}(2 n m c^{*}\|\psi\|_{L^{1,1}(C_{K})}+  \frac{\lambda}{3})}\right) \label{3eq10} \\ 
&&      \quad \quad + \mathcal{A}_{2} \exp \left( - \dfrac{\lambda^{2}}{18 \sqrt{2}\|\psi\|_{L^{1,1}(C_{K})}(81 n m \|\psi\|_{L^{1,1}(C_{K})}+  2\lambda)}\right), \nonumber \end{eqnarray}
where $$\mathcal{A}_{1}= 2 \exp \left(r(2N+1)^{(d+1)} \ln\big(4c^{*} + 1\big) \right),$$  $$\mathcal{A}_{2}= \dfrac{4 \left((2c^{*} +\frac{1}{4})(c^{*}+ \frac{1}{4}) \right )^{r(2N+1)^{(d+1)}}}{3r(\ln 2)^{2} (2N+1)^{(d+1)}}$$ and $c^{*}$ is as in Theorem \ref{thm3.5}.
  \label{lemm3.3}
\end{lem}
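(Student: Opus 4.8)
The plan is to reduce the supremum over the infinite set $V_{N}^{p,q,*}(\Phi)$ to a finite union‑bound estimate: discretize the class by an $\epsilon$-net using Lemma \ref{lem2}, and apply the Bernstein inequality of Lemma \ref{lem3.2} to the centred variables $Y_{j,k}$. The two exponential terms in \eqref{3eq10} arise, respectively, from the net points and from the fluctuation between a function and its nearest net point, the latter handled by chaining. Throughout I use that the constant $c^{*}$ of Theorem \ref{thm3.5} coincides with the constant $c'$ of Lemma \ref{lem1}: comparing exponents, $s_{1}p' = s_{1}p/(p-1)$, $1/p' = (p-1)/p$, and $2^{1/p'+1/q'} = 2^{2-1/p-1/q} = 4/2^{(p+q)/pq}$, so Lemmas \ref{lem1} and \ref{lem2} may be quoted verbatim with $c^{*}$ in place of $c'$.

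For the first term, fix the scale $\epsilon=\tfrac12$. By Lemma \ref{lem2} there is a net $\{g_{1},\dots,g_{M}\}\subseteq V_{N}^{p,q,*}(\Phi)$ with $M\le(4c^{*}+1)^{r(2N+1)^{(d+1)}}$ such that every $f$ admits some $g_{i(f)}$ with $\|f-g_{i(f)}\|_{L^{\infty,\infty}}\le\tfrac12$. The splitting
\[
\Big|\sum_{j,k}Y_{j,k}(f)\Big|\le\Big|\sum_{j,k}Y_{j,k}(g_{i(f)})\Big|+\Big|\sum_{j,k}(Y_{j,k}(f)-Y_{j,k}(g_{i(f)}))\Big|
\]
reduces the event in \eqref{3eq10} to the union of the event that some net point exceeds $\lambda/2$ and the event that some fluctuation exceeds $\lambda/2$. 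For a single net point $g$ one has $\|g\|_{L^{p,q}}=1$, so Lemma \ref{lem1} gives $\|g\|_{L^{\infty,\infty}(C_{2K})}\le c^{*}$; properties (2) and (4) then supply the almost-sure bound $c^{*}\|\psi\|_{L^{1,1}(C_{K})}$ and variance bound $(c^{*}\|\psi\|_{L^{1,1}(C_{K})})^{2}$ required in Lemma \ref{lem3.2}. Applying that lemma at level $\lambda/2$ and taking a union bound over the $M$ net points produces exactly $\mathcal{A}_{1}\exp(-\lambda^{2}/[4c^{*}\|\psi\|_{L^{1,1}(C_{K})}(2nmc^{*}\|\psi\|_{L^{1,1}(C_{K})}+\lambda/3)])$.

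The second term controls $\sup_{f}|\sum_{j,k}(Y_{j,k}(f)-Y_{j,k}(g_{i(f)}))|$ and is where the real work lies. I would set up a dyadic chain of nets at scales $\epsilon_{l}=2^{-l}$, $l\ge1$, and telescope $Y_{j,k}(f)-Y_{j,k}(g^{(1)})=\sum_{l\ge1}(Y_{j,k}(g^{(l+1)})-Y_{j,k}(g^{(l)}))$, where $g^{(l)}$ is the level-$l$ approximant of $f$ and $\|g^{(l+1)}-g^{(l)}\|_{L^{\infty,\infty}}\le3\cdot2^{-(l+1)}$. At level $l$ the number of admissible pairs is at most $\mathcal{N}(\epsilon_{l})\,\mathcal{N}(\epsilon_{l+1})$ by Lemma \ref{lem2}, while properties (3) and (5) (using $\|\cdot\|_{L^{\infty,\infty}(C_{2K})}\le\|\cdot\|_{L^{\infty,\infty}(\mathbb{R}\times\mathbb{R}^{d})}$) bound each difference almost surely by $2\cdot3\cdot2^{-(l+1)}\|\psi\|_{L^{1,1}(C_{K})}$ and its variance by the square of that quantity. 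Allocating the remaining budget $\lambda/2$ across the levels, applying Lemma \ref{lem3.2} per level, and summing the union bounds produces a geometric-type series dominated by the single term $\mathcal{A}_{2}\exp(-\lambda^{2}/[18\sqrt{2}\,\|\psi\|_{L^{1,1}(C_{K})}(81nm\|\psi\|_{L^{1,1}(C_{K})}+2\lambda)])$; the base of $\mathcal{A}_{2}$ is a product of covering-number bounds at the coarsest scales, the finer scales being absorbed into the series, and the powers of $3$, the $\sqrt{2}$, and the factor $1/[3r(\ln2)^{2}(2N+1)^{(d+1)}]$ track the constant $3$ in the telescoping bound, the dyadic weighting of the budget, and the closed form of the summed series, respectively.

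The main obstacle is precisely this chaining step: choosing the per-level budget $\mu_{l}$ (with $\sum_{l}\mu_{l}=\lambda/2$) so the per-level Bernstein exponents decay fast enough to beat the growing factors $\mathcal{N}(\epsilon_{l})\mathcal{N}(\epsilon_{l+1})$, and then collapsing the series into the stated closed form. The hypothesis on $\lambda$ is exactly the threshold making this feasible: squaring $\lambda>54r\sqrt{2}(\ln2)(2N+1)^{(d+1)}(1+(1+\tfrac{3nm}{2r\sqrt{2}(\ln2)(2N+1)^{(d+1)}})^{1/2})\|\psi\|_{L^{1,1}(C_{K})}$ shows it is equivalent to a lower bound of the form $\lambda^{2}\gtrsim \lambda\,r(2N+1)^{(d+1)}\|\psi\|_{L^{1,1}(C_{K})}+nm\,r(2N+1)^{(d+1)}\|\psi\|_{L^{1,1}(C_{K})}^{2}$ (with the explicit constants $\ln2$, $\sqrt2$ suppressed), which is what forces the chaining series to converge. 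Once the allocation is fixed, verifying the two explicit exponents and the constants $\mathcal{A}_{1},\mathcal{A}_{2}$ is routine bookkeeping with the elementary estimates recorded in properties (1)--(5).
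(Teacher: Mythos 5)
Your architecture is the one the paper's preparatory lemmas are designed for, and it is surely the intended proof: discretize $V_{N}^{p,q,*}(\Phi)$ via the covering bound of Lemma \ref{lem2}, apply the Bernstein inequality of Lemma \ref{lem3.2} to the centred variables $Y_{j,k}$ at the net points, and control the oscillation by dyadic chaining using properties (3) and (5). Your first term is complete and exact: the identification $c^{*}=c'$ is verified correctly, the scale-$\frac12$ net has at most $(4c^{*}+1)^{r(2N+1)^{(d+1)}}$ points, and Bernstein at level $\lambda/2$ with $M=c^{*}\|\psi\|_{L^{1,1}(C_{K})}$ and $\sigma^{2}=M^{2}$ gives exactly $2\exp\bigl(-\lambda^{2}/\bigl[4c^{*}\|\psi\|_{L^{1,1}(C_{K})}\bigl(2nmc^{*}\|\psi\|_{L^{1,1}(C_{K})}+\frac{\lambda}{3}\bigr)\bigr]\bigr)$ per net point, hence the term $\mathcal{A}_{1}\exp(\cdot)$ after the union bound. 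The chaining setup is also right, and indeed the base of $\mathcal{A}_{2}$ is latent in it: since $\mathcal{N}(2^{-l})\mathcal{N}(2^{-(l+1)})\leq\bigl[(2^{l+1}c^{*}+1)(2^{l+2}c^{*}+1)\bigr]^{r(2N+1)^{(d+1)}}$ and $(2^{l+1}c^{*}+1)(2^{l+2}c^{*}+1)=2^{2l+2}\bigl(c^{*}+2^{-(l+1)}\bigr)\bigl(2c^{*}+2^{-(l+1)}\bigr)\leq 2^{2l+2}\bigl(c^{*}+\frac14\bigr)\bigl(2c^{*}+\frac14\bigr)$ for $l\geq1$, the factor $\bigl((2c^{*}+\frac14)(c^{*}+\frac14)\bigr)^{r(2N+1)^{(d+1)}}$ appears once the powers $2^{(2l+2)r(2N+1)^{(d+1)}}$ are controlled.

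The gap is that your argument stops exactly where the lemma's content begins. You never choose the per-level budgets $\mu_{l}$, never write the per-level Bernstein exponent, and never carry out the summation that is supposed to collapse into $\mathcal{A}_{2}\exp\bigl(-\lambda^{2}/\bigl[18\sqrt{2}\,\|\psi\|_{L^{1,1}(C_{K})}(81nm\|\psi\|_{L^{1,1}(C_{K})}+2\lambda)\bigr]\bigr)$; you label this step "the main obstacle" and then defer it to "routine bookkeeping." It is not routine: the constants $81$ and $18\sqrt{2}$, the prefactor $4/\bigl[3r(\ln 2)^{2}(2N+1)^{(d+1)}\bigr]$ in $\mathcal{A}_{2}$, and the sufficiency of the stated threshold on $\lambda$ all depend on the specific allocation (for instance $\mu_{l}\propto 2^{-l/2}$ and $\mu_{l}\propto l\,2^{-l}$ lead to different final constants), on how the growing factors $2^{(2l+2)r(2N+1)^{(d+1)}}$ are beaten by the decaying exponentials, and on where exactly the hypothesis on $\lambda$ is invoked to dominate the resulting series by its first term. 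Your observation that squaring the threshold yields a bound of the form $\lambda^{2}\gtrsim\lambda\,r(2N+1)^{(d+1)}\|\psi\|_{L^{1,1}(C_{K})}+nm\,r(2N+1)^{(d+1)}\|\psi\|_{L^{1,1}(C_{K})}^{2}$ is correct in shape, but whether the displayed numerical constants make that bound sufficient cannot be checked until the allocation is fixed and the sum is performed. A complete proof must exhibit one concrete allocation and push it through to the stated closed form; as written, the second exponential term, and with it the lemma, remains unverified.
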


 The following theorem provides a sampling inequality for another subset $V^{p,q}_{N,\psi}(\Phi,\mu,C_K)$ of $L^{p,q}(\mathbb{R}\times\mathbb{R}^d)$, defined as in \eqref{rte1}.
\begin{thm}
Let $\Phi,\,\psi,\,\rho$ and the i.i.d random variables $\{(x_j,y_k)\}_{j,k\in\mathbb{N}}$ over $C_K$ be as in the hypothesis of Theorem \ref{thm3.5}. For $N\in\mathbb{N},\,0<\mu\leq 1$ and $0<\eta<\mu \mathcal{C}_{\rho,1}$, let $m,n\in\mathbb{N}$ be such that 
\begin{equation}
	nm>\dfrac{54r\sqrt{2}(\ln 2)(2N+1)^{(d+1)}}{\eta}\left(2+\frac{81}{\eta}\right).  \label{rteq3}
\end{equation}
 Then, the sampling inequality for the functions $f\in V^{p,q}_{N,\psi}(\Phi,\mu,C_K)$, namely,
\begin{align}
& \bigg(nm\|\psi\|_{L^{1,1}(C_K)}(\mu \mathcal{C}_{\rho,1}-\eta) \bigg )\|f\|_{L^{p,q}(\mathbb{R}\times\mathbb{R}^d)}\nonumber\\
&\qquad\leq\sum_{j=1}^{n}\sum_{k=1}^{m}|(f\ast\psi)(x_j,y_k)|\nonumber\\
&\qquad\qquad\leq \bigg(nm\|\psi\|_{L^{1,1}(C_K)}\left(\mathcal{C}_{\rho,2}(2K_1)^{(\frac{p-1}{p})}(2K_2)^{\frac{d(q-1)}{q}}+\eta \right) \bigg )\|f\|_{L^{p,q}(\mathbb{R}\times\mathbb{R}^d)},\label{E:Smplg_Ineq2}
\end{align}
holds with probability atleast \[1-\mathcal{A}_1 \exp\left(-nm\frac{3\eta^2}{4c^{*}(6c^{\ast}+\eta)}\right)-\mathcal{A}_2 \exp\left(-nm\frac{\eta^2}{18\sqrt{2}(81+2\eta)}\right),\]
where $\mathcal{A}_1, \mathcal{A}_2$ and $c^{\ast}$ are as in Lemma \ref{lemm3.3}. \label{rtthm}
\end{thm}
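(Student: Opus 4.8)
The plan is to reduce the statement to the concentration estimate of Lemma \ref{lemm3.3} by choosing the threshold $\lambda$ appropriately and then bounding the mean term $\int_{C_K}\rho(x,y)|(f*\psi)(x,y)|\,dx\,dy$ from both sides. First I would use the definition \eqref{3eq2} of $Y_{j,k}$ to write, for every $f\in V_N^{p,q}(\Phi)$,
\[\sum_{j=1}^n\sum_{k=1}^m |(f*\psi)(x_j,y_k)| = \sum_{j=1}^n\sum_{k=1}^m Y_{j,k}(f) + nm\int_{C_K}\rho(x,y)|(f*\psi)(x,y)|\,dx\,dy.\]
Set $\lambda := nm\,\eta\,\|\psi\|_{L^{1,1}(C_K)}$. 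Since $Y_{j,k}$ is positively homogeneous of degree one (i.e.\ $Y_{j,k}(cf)=|c|\,Y_{j,k}(f)$), the event $\mathcal{E}=\{\sup_{g\in V_N^{p,q,*}(\Phi)}|\sum_{j,k}Y_{j,k}(g)|<\lambda\}$ forces $|\sum_{j,k}Y_{j,k}(f)|<\lambda\|f\|_{L^{p,q}(\mathbb{R}\times\mathbb{R}^d)}$ for every $f\in V_N^{p,q}(\Phi)$, and in particular for every $f\in V_{N,\psi}^{p,q}(\Phi,\mu,C_K)$. On $\mathcal{E}$ the displayed identity then sandwiches the sampling sum between $nm\int_{C_K}\rho|(f*\psi)|\pm\lambda\|f\|_{L^{p,q}(\mathbb{R}\times\mathbb{R}^d)}$.

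Next I would estimate the mean term. For the lower bound, $\rho\geq\mathcal{C}_{\rho,1}$ together with the defining inequality \eqref{rte1} of $V_{N,\psi}^{p,q}(\Phi,\mu,C_K)$ (and $\|\psi\|_{L^1(\mathbb{R}^{d+1})}=\|\psi\|_{L^{1,1}(C_K)}$, valid by $(A_2)$) gives $\int_{C_K}\rho|(f*\psi)|\geq\mathcal{C}_{\rho,1}\mu\|\psi\|_{L^{1,1}(C_K)}\|f\|_{L^{p,q}(\mathbb{R}\times\mathbb{R}^d)}$; subtracting $\lambda\|f\|_{L^{p,q}(\mathbb{R}\times\mathbb{R}^d)}$ and inserting the choice of $\lambda$ yields precisely the left-hand constant $nm\|\psi\|_{L^{1,1}(C_K)}(\mu\mathcal{C}_{\rho,1}-\eta)$, which is positive because $\eta<\mu\mathcal{C}_{\rho,1}$. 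For the upper bound, $\rho\leq\mathcal{C}_{\rho,2}$ reduces matters to $\|f*\psi\|_{L^{1,1}(C_K)}$, which I would control by Hölder's inequality on the compact box $C_K$ (passing from the $L^{1,1}$ to the $L^{p,q}$ norm at the cost of the volume factors $(2K_1)^{(p-1)/p}(2K_2)^{d(q-1)/q}$) followed by the mixed-norm Young inequality \eqref{3eq4} of Lemma \ref{0lemma}, which gives $\|f*\psi\|_{L^{p,q}(C_K)}\leq\|\psi\|_{L^{1,1}(C_K)}\|f\|_{L^{p,q}(\mathbb{R}\times\mathbb{R}^d)}$. Adding $\lambda\|f\|_{L^{p,q}(\mathbb{R}\times\mathbb{R}^d)}$ then produces the right-hand constant $nm\|\psi\|_{L^{1,1}(C_K)}(\mathcal{C}_{\rho,2}(2K_1)^{(p-1)/p}(2K_2)^{d(q-1)/q}+\eta)$.

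It remains to quantify $\mathrm{Prob}(\mathcal{E})$. I would apply Lemma \ref{lemm3.3} with the above $\lambda$; substituting $\lambda=nm\eta\|\psi\|_{L^{1,1}(C_K)}$ into the two exponents of \eqref{3eq10} cancels a factor $\|\psi\|_{L^{1,1}(C_K)}$ and one power of $nm$, collapsing them exactly to $nm\,\frac{3\eta^2}{4c^*(6c^*+\eta)}$ (using $3(2c^*+\eta/3)=6c^*+\eta$) and $nm\,\frac{\eta^2}{18\sqrt2(81+2\eta)}$. Thus $\mathrm{Prob}(\mathcal{E}^c)$ is at most the claimed tail and $\mathrm{Prob}(\mathcal{E})\geq 1-\mathcal{A}_1 e^{-nm\,3\eta^2/(4c^*(6c^*+\eta))}-\mathcal{A}_2 e^{-nm\,\eta^2/(18\sqrt2(81+2\eta))}$.

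The step I expect to be the real obstacle is verifying that the hypothesis \eqref{rteq3} is exactly what legitimizes this application of Lemma \ref{lemm3.3}, i.e.\ that $\lambda=nm\eta\|\psi\|_{L^{1,1}(C_K)}$ meets the admissibility bound of that lemma. Writing $A=r\sqrt2(\ln2)(2N+1)^{(d+1)}$ and $t=nm/A$, this requirement reduces to $t\eta>54\big(1+(1+\tfrac{3t}{2})^{1/2}\big)$. Solving the boundary equation $t\eta-54=54(1+\tfrac{3t}{2})^{1/2}$ (squaring is legitimate since $t\eta-54>0$ at the relevant range) gives, after cancelling the extraneous root $t=0$, the single admissible threshold $t=\tfrac{108}{\eta}+\tfrac{4374}{\eta^2}=\tfrac{54}{\eta}(2+\tfrac{81}{\eta})$, which is precisely \eqref{rteq3}; at this threshold $1+\tfrac{3t}{2}=(1+\tfrac{81}{\eta})^2$, and the function $t\mapsto t\eta-54-54(1+\tfrac{3t}{2})^{1/2}$ is convex with derivative $\eta\,\tfrac{\eta+81/2}{\eta+81}>0$ there, so the required inequality persists for all larger $t$. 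Everything else in the argument is routine bookkeeping.
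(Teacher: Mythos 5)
Your proposal is correct and follows essentially the same route the paper intends: summing the definition \eqref{3eq2} of $Y_{j,k}$, applying Lemma \ref{lemm3.3} with $\lambda=nm\,\eta\,\|\psi\|_{L^{1,1}(C_K)}$ (whose substitution into \eqref{3eq10} reproduces exactly the two stated exponents), bounding the mean term below via \eqref{rte1} and $\rho\geq\mathcal{C}_{\rho,1}$ and above via $\rho\leq\mathcal{C}_{\rho,2}$, H\"older on $C_K$ and Young's inequality \eqref{3eq4}. Your verification that \eqref{rteq3} is precisely the admissibility threshold of Lemma \ref{lemm3.3} for this choice of $\lambda$ (including the identity $1+\tfrac{3t}{2}=\bigl(1+\tfrac{81}{\eta}\bigr)^{2}$ at the boundary and the monotonicity argument beyond it) is accurate, so no gaps remain.
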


 \noindent
We shall now consider the sampling inequality for the set $V_{\psi}^{p,q}(\Phi, \delta, C_{K})$. This requires the following finite dimensional approximation.
 \begin{lem} \cite{jiangli3} \label{lemma2.1}
 Let $1 < p,q < \infty, \frac{1}{p} + \frac{1}{p'}=1$ and $ \frac{1}{q} + \frac{1}{q'}=1.$ Let  $f \in V^{p,q}(\Phi)$ and $\|f\|_{L^{p,q}(\mathbb{R} \times \mathbb{R}^{d})}=1,$ wherein $\Phi$ satisfies the assumption $(A_{1}).$ Also let $C_{K} = [-K_{1}, K_{1}] \times [-K_{2}, K_{2}]^{d}$ and $s=\min\{s_{1}, s_{2}\} + \frac{1}{p}+ \frac{d}{q} -(d+1).$  Then for any $\epsilon_{1},\epsilon_{2}>0,$ there exists  $f_{N} \in V_{N}^{p,q}(\Phi)$ such that
 \begin{equation*}
 \|f - f_{N}\|_{L^{p,q}(C_{K})} \leq \epsilon_{1} 
 \end{equation*}  
\begin{eqnarray*} && \text{if}\ N \geq N_{1}(K_{1}, K_{2}, \epsilon_{1}) =  \max \{K_{1}, K_{2}\} + \\ && \quad \quad \quad \quad \quad \quad \quad  \bigg(\dfrac{\tilde{c}(K_{1})^{\frac{1}{p}}(K_{2})^{\frac{d}{q}} d^{\frac{1}{q'}}(1+K_{2})^{\left(\frac{d-1}{q'}+\frac{1}{p'} \right)}2^{(d+1)}}{\alpha_{1} \epsilon_{1}(s_{2}q'-d)^{\frac{1}{q'}}} \\
&& \quad \quad \quad \quad \quad \quad \quad \quad \quad + \dfrac{\tilde{c}(K_{1})^{\frac{1}{p}}(K_{2})^{\frac{d}{q}} (1+K_{1})^{\frac{d}{q'}}2^{(d+1)}}{\alpha_{1} \epsilon_{1}(s_{1}p'-1)^{\frac{1}{p'}}}   \\  
&& \quad \quad \quad \quad \quad \quad \quad \quad \quad \quad \quad + \dfrac{\tilde{c}(K_{1})^{\frac{1}{p}}(K_{2})^{\frac{d}{q}} d^{\frac{1}{q'}} (1+K_{2})^{\left(\frac{d-1}{q'} \right)}2^{(d+1)}}{\alpha_{1} \epsilon_{1}(s_{1}p'-1)^{\frac{1}{p'}}(s_{2}q'-d)^{\frac{1}{q'}}}  \bigg)^{\frac{1}{s}}
\end{eqnarray*} 
and
 \begin{equation*}
\|f-f_{N}\|_{L^{\infty, \infty}(C_{K})} \leq \epsilon_{2}
\end{equation*}
 
  \begin{eqnarray*}
  && \text{if} \ N \geq N_{2}(K_{1}, K_{2}, \epsilon_{2}) =  \max \{K_{1}, K_{2}\} + \\ && \quad \quad \quad \quad \quad \quad \quad \bigg(\dfrac{\tilde{c}d^{\frac{1}{q'}}(1+K_{2})^{\left(\frac{d-1}{q'}+\frac{1}{p'} \right)}2^{\left(\frac{1}{p'} + \frac{d}{q'} \right)}}{\alpha_{1} \epsilon_{2}(s_{2}q'-d)^{\frac{1}{q'}}} + \dfrac{\tilde{c} (1+K_{1})^{\frac{d}{q'}}2^{ \left(\frac{1}{p'} + \frac{d}{q'} \right) }}{\alpha_{1} \epsilon_{2}(s_{1}p'-1)^{\frac{1}{p'}}}   \\  
&& \quad \quad \quad \quad \quad \quad \quad \quad \quad \quad \quad + \dfrac{\tilde{c} d^{\frac{1}{q'}} (1+K_{2})^{\left(\frac{d-1}{q'} \right)}2^{ \left(\frac{1}{p'} + \frac{d}{q'} \right )}}{\alpha_{1} \epsilon_{2}(s_{1}p'-1)^{\frac{1}{p'}}(s_{2}q'-d)^{\frac{1}{q'}}}  \bigg)^{\frac{1}{s}}.
  \end{eqnarray*}
 \end{lem}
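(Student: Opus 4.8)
The plan is to prove the lemma by truncating the shift-invariant expansion of $f$ and controlling the tail through the polynomial decay \eqref{2eq2} and the lower stability bound in \eqref{2eq1}. Writing $f=\sum_{i=1}^{r}\sum_{k_1\in\mathbb{Z}}\sum_{k_2\in\mathbb{Z}^d}c_i(k_1,k_2)\,\phi_i(\cdot-k_1,\cdot-k_2)$, I would take $f_N$ to be the truncation of this series to $|k_1|\le N,\ |k_2|\le N$, so that $f_N\in V_N^{p,q}(\Phi)$ and $f-f_N$ is the sum over the tail index set $\Lambda_N^c:=\{(k_1,k_2):|k_1|>N\ \text{or}\ |k_2|>N\}$. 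The left inequality of \eqref{2eq1} together with $\|f\|_{L^{p,q}(\mathbb{R}\times\mathbb{R}^d)}=1$ yields the uniform coefficient bound $\sum_{i=1}^{r}\|c_i\|_{l^{p,q}}=\|\mathbf{c}\|_{l^{p,q}}\le\alpha_1^{-1}$.

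First I would establish the $L^{\infty,\infty}(C_K)$ estimate. Fixing $(x,y)\in C_K$ and $N\ge\max\{K_1,K_2\}$, I dominate each $|\phi_i(x-k_1,y-k_2)|$ by $\tilde c\,(1+|x-k_1|)^{-s_1}(1+|y-k_2|)^{-s_2}$ via \eqref{2eq2} and apply Hölder's inequality for mixed sequence spaces (the duality between $l^{p,q}$ and $l^{p',q'}$) to each index $i$, which gives
\[
|(f-f_N)(x,y)|\le\alpha_1^{-1}\,\tilde c\,\big\|(1+|x-k_1|)^{-s_1}(1+|y-k_2|)^{-s_2}\,\mathbf{1}_{\Lambda_N^c}\big\|_{l^{p',q'}}.
\]
The crux is then to decompose $\Lambda_N^c$ into the three disjoint pieces $\{|k_1|>N,\,|k_2|\le N\}$, $\{|k_1|\le N,\,|k_2|>N\}$ and $\{|k_1|>N,\,|k_2|>N\}$, which produce exactly the three summands appearing in $N_2(K_1,K_2,\epsilon_2)$. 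On each piece the mixed norm factorizes across $k_1$ and $k_2$, and the tails $\sum_{|k_1|>N}(1+|x-k_1|)^{-s_1p'}$ and $\sum_{|k_2|>N}(1+|y-k_2|)^{-s_2q'}$ are estimated by comparison with integrals, using $|x|\le K_1$, $|y|\le K_2$, $N\ge\max\{K_1,K_2\}$, and the $O(R^{d-1})$ growth of the sup-norm lattice shells in the $k_2$ variable. This is precisely where the denominators $(s_1p'-1)^{1/p'}$ and $(s_2q'-d)^{1/q'}$ and the decay rate $N^{-s}$, with $s=\min\{s_1,s_2\}+\tfrac1p+\tfrac dq-(d+1)$, emerge; the convergence of all the sums is guaranteed by the standing hypothesis $s_1,s_2>d+1-\tfrac1p-\tfrac dq$. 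Each of the three regions in fact decays at least as fast as $N^{-s}$, so bounding them by the common power $N^{-s}$ and taking the supremum over $(x,y)\in C_K$ yields $\|f-f_N\|_{L^{\infty,\infty}(C_K)}\le\epsilon_2$ once $N\ge N_2(K_1,K_2,\epsilon_2)$.

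The $L^{p,q}(C_K)$ estimate would then follow at once, since $C_K$ has finite measure and hence $\|f-f_N\|_{L^{p,q}(C_K)}\le(2K_1)^{1/p}(2K_2)^{d/q}\|f-f_N\|_{L^{\infty,\infty}(C_K)}$; repeating the previous computation with this extra volume factor produces the threshold $N_1(K_1,K_2,\epsilon_1)$, which indeed differs from $N_2$ only by the factor $(2K_1)^{1/p}(2K_2)^{d/q}$ inside the $(\,\cdot\,)^{1/s}$. The main obstacle I anticipate is bookkeeping rather than conceptual: pushing the three-region decomposition through the mixed-norm Hölder estimate while tracking the exact dependence on $K_1,K_2,p',q',d$ so as to reproduce the stated constants, and in particular checking that every one of the three tails decays with at least the common exponent $s$. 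As this is the finite-dimensional approximation lemma of \cite{jiangli3}, the argument follows the template established there, adapted to the rectangular set $C_K=[-K_1,K_1]\times[-K_2,K_2]^d$.
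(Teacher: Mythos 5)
The paper itself gives no proof of this lemma (it is quoted from \cite{jiangli3}), but your blind reconstruction is correct and coincides with the argument of that reference: truncation of the shift-invariant expansion to $|k_1|\le N$, $|k_2|\le N$, the coefficient bound $\|\mathbf{c}\|_{l^{p,q}}\leq\alpha_1^{-1}$ from the lower stability inequality \eqref{2eq1}, mixed-norm H\"older duality against $l^{p',q'}$ applied to the decay bound \eqref{2eq2}, the three-region tail decomposition with sum--integral comparison, and finally the volume factor $(2K_1)^{1/p}(2K_2)^{d/q}$ to deduce the $L^{p,q}(C_K)$ estimate from the $L^{\infty,\infty}(C_K)$ one. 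The stated constants corroborate your route exactly: the three summands in $N_1$ and $N_2$ correspond to your three index regions (with denominators $(s_1p'-1)^{1/p'}$, $(s_2q'-d)^{1/q'}$ and their product), each region's decay exponent dominates $s=\min\{s_1,s_2\}-\tfrac{1}{p'}-\tfrac{d}{q'}>0$ under assumption $(A_1)$, and $N_1$ differs from $N_2$ precisely by the factor $(2K_1)^{1/p}(2K_2)^{d/q}=(K_1)^{1/p}(K_2)^{d/q}2^{1/p+d/q}$ inside $(\cdot)^{1/s}$, just as you observed.
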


\begin{thm}
Let $\Phi, \psi, \{(x_{j}, y_{k})\}_{j,k \in \mathbb{N}},$ cuboid $C_{K}=[-K_{1}, K_{1}]\times [-K_{2}, K_{2}]^{d}$ and the probability density function $\rho$ be as in the hypothesis of Theorem \eqref{thm3.5}. Then for $0 < \delta < 1$, $ 0 < \epsilon < 1- \delta $ and $0 < \gamma < 1- \dfrac{\epsilon}{(1-\delta-\epsilon)^{(1+pq)}},$    the sampling inequality 
\begin{equation}
A\|f\|_{L^{p,q}(\mathbb{R} \times \mathbb{R}^{d})} \leq \bigg\|\{(f*\psi)(x_{j}, y_{k})\}_{\substack{j=1,2,\dots, n;\\ k=1,2,\dots, m}}\bigg\|_{l^{p,q}} \leq B \|f\|_{L^{p,q}(\mathbb{R} \times \mathbb{R}^{d})} \label{3eq24}
\end{equation}  

\noindent
holds uniformly for all 
$f \in V_{\psi}^{p,q}(\Phi, \delta, C_{K})$
 with probability at least
 $$ 1 - \mathcal{A}_{1} e^{ (-nm \beta_{1} )} - \mathcal{A}_{2} e^{(-nm \beta_{2} )},$$ provided $n,m$ satisfies \eqref{eqnm}, where
\begin{eqnarray*}
&& A= \dfrac{\mathcal{C}_{\rho, 1} (c^{*})^{(1-pq)} \|\psi\|_{L^{1,1}(C_{K})}\left(  (1- \gamma)(1-\delta-\epsilon)^{(1+pq)} -\epsilon \right )}{(2K_{1})^{(q-1)} (2K_{2})^{d(p-1)}}n^{\frac{1}{p}} m^{\frac{1}{q}}, \\
&& B= \dfrac{\alpha_{2}  \|\psi\|_{L^{1,1}(C_{K})}}{\alpha_{1}} \left(\dfrac{\mathcal{C}_{\rho, 2}}{(2K_{1})^{\left(\frac{1-p}{p} \right)} (2K_{2})^{d \left(\frac{1-q}{q} \right)}} +  \dfrac{\gamma \mathcal{C}_{\rho, 1} (c^{*})^{(1-pq)} (1-\delta-\epsilon)^{pq}}{(2K_{1})^{(q-1)} (2K_{2})^{d(p-1)}}\right) nm\\ && \quad \quad  + \ \dfrac{  \epsilon \mathcal{C}_{\rho, 1} (c^{*})^{(1-pq)} \|\psi\|_{L^{1,1}(C_{K})}}{(2K_{1})^{(q-1)}(2K_{2})^{d(p-1)}} n^{\frac{1}{p}} m^{\frac{1}{q}}, 
\end{eqnarray*}

\noindent
the constants $c^{*}, \mathcal{A}_{1}, \mathcal{A}_{2}, \beta_{1}$ and $\beta_{2}$ are as in Theorem \ref{thm3.5} with  $\omega= (1-\delta - \epsilon)\|\psi\|_{L^{1,1}(C_{K})}.$ The constant $N$ appearing in $\mathcal{A}_{1}$ and $\mathcal{A}_{2}$ is given by  
 $$N=\max \left \{N_{1}\left(2K_{1}, 2K_{2}, \epsilon \right), N_{2}\left(2K_{1}, 2K_{2}, \dfrac{\epsilon \mathcal{C}_{\rho, 1} (c^{*})^{(1-pq)}}{(2K_{1})^{(q-1)} (2K_{2})^{d(p-1)}} \right) \right \},$$
  where $N_{1}$ and $N_{2}$ are as in Lemma \ref{lemma2.1}.
 \label{thm3.9}
\end{thm}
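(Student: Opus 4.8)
The plan is to transfer the finite-dimensional sampling inequality of Theorem \ref{thm3.5} to the set $V_\psi^{p,q}(\Phi,\delta,C_K)$ by means of the approximation Lemma \ref{lemma2.1}, choosing the truncation level $N$ and the averaging threshold $\omega$ so that every approximant lands in $V_{N,\omega,\psi}^{p,q}(\Phi)$. By homogeneity of \eqref{3eq24} it suffices to treat $f\in V_\psi^{p,q}(\Phi,\delta,C_K)$ with $\|f\|_{L^{p,q}(\mathbb{R}\times\mathbb{R}^d)}=1$. First I would apply Lemma \ref{lemma2.1} on the enlarged cuboid $C_{2K}$ with the two tolerances $\epsilon_1=\epsilon$ and $\epsilon_2=\frac{\epsilon\,\mathcal{C}_{\rho,1}(c^*)^{(1-pq)}}{(2K_1)^{(q-1)}(2K_2)^{d(p-1)}}$; this is exactly why $N=\max\{N_1(2K_1,2K_2,\epsilon),\,N_2(2K_1,2K_2,\epsilon_2)\}$, and it yields $f_N\in V_N^{p,q}(\Phi)$ with $\|f-f_N\|_{L^{p,q}(C_{2K})}\le\epsilon$ and $\|f-f_N\|_{L^{\infty,\infty}(C_{2K})}\le\epsilon_2$.

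The next step is to verify that $f_N\in V_{N,\omega,\psi}^{p,q}(\Phi)$ for $\omega=(1-\delta-\epsilon)\|\psi\|_{L^{1,1}(C_K)}$. Using the defining inequality of $V_\psi^{p,q}(\Phi,\delta,C_K)$, the reverse triangle inequality, and the convolution estimate \eqref{3eq4}, one gets $\|f_N*\psi\|_{L^{p,q}(C_K)}\ge\|f*\psi\|_{L^{p,q}(C_K)}-\|(f-f_N)*\psi\|_{L^{p,q}(C_K)}\ge(1-\delta)\|\psi\|_{L^{1,1}(C_K)}-\epsilon\|\psi\|_{L^{1,1}(C_K)}=\omega$. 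Since $0<1-\delta-\epsilon<1$ we have $0<\omega<\|\psi\|_{L^{1,1}(C_K)}$, so Theorem \ref{thm3.5} applies with this $\omega$ under hypothesis \eqref{eqnm}, producing on a \emph{single} event of probability at least $1-\mathcal{A}_1e^{-nm\beta_1}-\mathcal{A}_2e^{-nm\beta_2}$ the inequality \eqref{3eq1} simultaneously for all elements of $V_{N,\omega,\psi}^{p,q}(\Phi)$, in particular for $f_N$. Because this event is fixed (the bound in Lemma \ref{lemm3.3} is uniform over the unit ball) while $f_N$ merely ranges over a subset of $V_{N,\omega,\psi}^{p,q}(\Phi)$ as $f$ varies, the resulting estimate is uniform in $f$, as required.

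To pass back from $f_N$ to $f$ I would control the perturbation in $l^{p,q}$ norm. Since every sample point lies in $C_K$, estimate \eqref{0eqn} gives $|((f-f_N)*\psi)(x_j,y_k)|\le\|f-f_N\|_{L^{\infty,\infty}(C_{2K})}\|\psi\|_{L^{1,1}(C_K)}\le\epsilon_2\|\psi\|_{L^{1,1}(C_K)}$, so the $nm$ samples satisfy $P:=\big\|\{((f-f_N)*\psi)(x_j,y_k)\}\big\|_{l^{p,q}}\le n^{1/p}m^{1/q}\epsilon_2\|\psi\|_{L^{1,1}(C_K)}$, which is precisely the extra additive term occurring in both $A$ and $B$. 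The two-sided estimate $\big\|\{(f_N*\psi)(x_j,y_k)\}\big\|_{l^{p,q}}-P\le\big\|\{(f*\psi)(x_j,y_k)\}\big\|_{l^{p,q}}\le\big\|\{(f_N*\psi)(x_j,y_k)\}\big\|_{l^{p,q}}+P$ then reduces everything to \eqref{3eq1} for $f_N$, once $\|f_N\|_{L^{p,q}}$ is bounded on both sides. The lower bound is immediate: $\|f_N\|_{L^{p,q}(\mathbb{R}\times\mathbb{R}^d)}\ge\|f_N\|_{L^{p,q}(C_K)}\ge\|f\|_{L^{p,q}(C_K)}-\epsilon\ge(1-\delta)-\epsilon$, using $f\in V^{p,q}(\Phi,\delta,C_K)$. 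Substituting $\omega=(1-\delta-\epsilon)\|\psi\|_{L^{1,1}(C_K)}$ into $\mathcal{A}_{\gamma,\omega}$ and multiplying by $1-\delta-\epsilon$ reproduces exactly the constant $A$, whose positivity is guaranteed by the hypothesis $\gamma<1-\epsilon/(1-\delta-\epsilon)^{(1+pq)}$.

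The main obstacle, and the step I would treat most carefully, is the \emph{upper} bound on $\|f_N\|_{L^{p,q}}$, because the $L^{p,q}(C_{2K})$ approximation alone does not control the global norm of $f_N$. Here I would use that the approximant furnished by Lemma \ref{lemma2.1} is the truncation of the expansion of $f$: if $\mathbf{c}$ is the coefficient sequence of $f$ and $\mathbf{c}_N$ its truncation, then the stability assumption \eqref{2eq1} gives $\|f_N\|_{L^{p,q}}\le\alpha_2\|\mathbf{c}_N\|_{l^{p,q}}\le\alpha_2\|\mathbf{c}\|_{l^{p,q}}\le(\alpha_2/\alpha_1)\|f\|_{L^{p,q}}=\alpha_2/\alpha_1$. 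Feeding $\|f_N\|_{L^{p,q}}\le\alpha_2/\alpha_1$ into $\mathcal{B}_{\gamma,\omega}$ recovers the factor $\alpha_2/\alpha_1$ appearing in $B$, and adding $P$ completes the upper estimate; together with the lower estimate this yields \eqref{3eq24} uniformly on the stated event.
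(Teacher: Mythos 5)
Your proposal is correct and follows essentially the same route as the paper: normalize, approximate $f$ on $C_{2K}$ by the coefficient-truncation $f_N$ from Lemma \ref{lemma2.1} with tolerances $\epsilon$ and $\epsilon\,\mathcal{C}_{\rho,1}(c^{*})^{(1-pq)}(2K_1)^{-(q-1)}(2K_2)^{-d(p-1)}$, verify $f_N\in V_{N,\omega,\psi}^{p,q}(\Phi)$ for $\omega=(1-\delta-\epsilon)\|\psi\|_{L^{1,1}(C_K)}$, invoke Theorem \ref{thm3.5} on the single uniform event, and transfer back via the $n^{1/p}m^{1/q}$ perturbation term together with the bounds $1-\delta-\epsilon\leq\|f_N\|_{L^{p,q}}\leq\alpha_2/\alpha_1$, which reproduces $A$ and $B$ exactly. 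Your explicit justification of the upper bound $\|f_N\|_{L^{p,q}}\leq(\alpha_2/\alpha_1)\|f\|_{L^{p,q}}$ via truncation of the coefficient sequence and the stability assumption \eqref{2eq1} is precisely the step the stated constants require, so there is no gap.
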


 \begin{rem}
We observe that the probabilities with which the sampling inequalities proved in Theorems \ref{thm3.5}, \ref{rtthm} and \ref{thm3.9} hold approach one when the sample size tends to infinity.
 \end{rem}

\section{Reconstruction using random average samples} \label{sec4}

In this section, we give reconstruction formulae for functions in the signal classes $V_{N, \omega, \psi}^{p,q}(\Phi)$ and $V_{N, \psi}^{p,q}(\Phi, \mu, C_{K}).$


\begin{thm} Let  $\Phi$ and $\psi$ satisfy the assumptions $(A_{1})$ and $(A_{2})$ respectively. Suppose $\{(x_{j}, y_{k}) \}_{j,k \in \mathbb{N}}$ is a sequence of i.i.d. random variables that are drawn from a general probability distribution over the cuboid $C_{K}=[-K_{1}, K_{1}] \times [-K_{2}, K_{2}]^{d}$ with the density function $\rho$ satisfying the assumption $(A_{3})$. If
\begin{equation}
\left\|\sum_{|k_{1}| \leq N}\sum_{|k_{2}| \leq N} \textbf{c}^{T}(k_{1}, k_{2}) (\Phi * \psi)(\cdot -k_{1}, \cdot -k_{2})\right\|_{L^{p,q}(C_{K})} \geq \tilde{\beta}\|\textbf{c}\|_{l^{p,q}}   \label{4eq1}
\end{equation} holds for all $\textbf{c} \in \left(l^{p,q}\big([-N, N] \times [-N, N]^{d}\big)\right)^{r}, N \in \mathbb{N}$ and  for some positive constant $\tilde{\beta}$, then for any $\gamma \in (0,1)$, there exists a finite sequence of functions $\bigg\{G_{j,k}:{(j,k) \in \{1,2, \dots , n\} \times \{1,2, \dots, m\}} \bigg\} $ such that
$$f(x, y)= \sum_{j=1}^{n} \sum_{k=1}^{m}(f*\psi)(x_{j}, y_{k}) G_{j,k}(x,y)$$ holds for all $f \in V_{N}^{p,q}(\Phi)$ with probability at least
 
\begin{eqnarray}
 && 1 - \mathcal{A}_{1}\exp\left ( -nm\dfrac{ (2K_{1})^{(1-q)}(2K_{2})^{d(1-p)}\left(\frac{\sqrt{3}}{2}\gamma \mathcal{C}_{\rho, 1} \left(\frac{\tilde{\beta}(\alpha_{2} c^{*})^{-1}}{\|\psi\|_{L^{1,1}(C_{K})}}\right )^{pq}\right)^{2}}{6(2K_{1})^{(q-1)}(2K_{2})^{d(p-1)}+\gamma \mathcal{C}_{\rho, 1} \left(\frac{\tilde{\beta}(\alpha_{2} c^{*})^{-1}}{\|\psi\|_{L^{1,1}(C_{K})}}\right )^{pq}} \right )\nonumber \\ 
 &&  - \mathcal{A}_{2} \exp \left ( -nm\dfrac{(2K_{1})^{(1-q)}(2K_{2})^{d(1-p)}\left(\gamma \mathcal{C}_{\rho, 1} \left(\frac{\tilde{\beta}(\alpha_{2} c^{*})^{-1}}{\|\psi\|_{L^{1,1}(C_{K})}}\right )^{pq} c^{*}\right)^{2}}{18 \sqrt{2}\left(81(2K_{1})^{(q-1)}(2K_{2})^{d(p-1)}+ 2 \gamma \mathcal{C}_{\rho, 1} \left(\frac{\tilde{\beta}(\alpha_{2} c^{*})^{-1}}{\|\psi\|_{L^{1,1}(C_{K})}}\right )^{pq}c^{*}\right)} \right ),\nonumber\\ \label{eqn41.1} \label{neweq}
  \end{eqnarray}

\noindent
 where $c^{*}, \mathcal{A}_{1}$ and $\mathcal{A}_{2}$ are positive constants as in Theorem \ref{thm3.5}. \label{thmrec1}
\end{thm}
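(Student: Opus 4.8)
The plan is to reduce the reconstruction problem to the probabilistic lower sampling inequality already established in Theorem \ref{thm3.5}, and then to manufacture the reconstruction functions $G_{j,k}$ by inverting the (now injective) sampling operator on the finite-dimensional space $V_N^{p,q}(\Phi)$. First I would convert the stability hypothesis \eqref{4eq1} on the averaged generators into a \emph{uniform} lower bound for $\|f\ast\psi\|_{L^{p,q}(C_K)}$ over all of $V_N^{p,q}(\Phi)$. Writing $f=\sum \mathbf{c}^{T}\Phi(\cdot-k_1,\cdot-k_2)$, we have $f\ast\psi=\sum \mathbf{c}^{T}(\Phi\ast\psi)(\cdot-k_1,\cdot-k_2)$, so \eqref{4eq1} gives $\|f\ast\psi\|_{L^{p,q}(C_K)}\geq\tilde{\beta}\|\mathbf{c}\|_{l^{p,q}}$; combining with the upper frame bound $\|f\|_{L^{p,q}(\mathbb{R}\times\mathbb{R}^d)}\leq\alpha_2\|\mathbf{c}\|_{l^{p,q}}$ from $(A_1)$ yields $\|f\ast\psi\|_{L^{p,q}(C_K)}\geq(\tilde{\beta}/\alpha_2)\|f\|_{L^{p,q}(\mathbb{R}\times\mathbb{R}^d)}$ for every $f\in V_N^{p,q}(\Phi)$. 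Setting $\omega:=\tilde{\beta}/\alpha_2$, the Young estimate \eqref{3eq4} forces $\omega\leq\|\psi\|_{L^{1,1}(C_K)}$, so the admissibility condition $0<\omega\leq\|\psi\|_{L^{1,1}(C_K)}$ of Theorem \ref{thm3.5} holds, and the above bound shows that every nonzero $f\in V_N^{p,q}(\Phi)$ lies in $V_{N,\omega,\psi}^{p,q}(\Phi)$.

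Next I would invoke the lower half of Theorem \ref{thm3.5} (equivalently, the Bernstein/covering-number estimate of Lemma \ref{lemm3.3}) with this value of $\omega$. Since $V_{N,\omega,\psi}^{p,q}(\Phi)$ now exhausts $V_N^{p,q}(\Phi)$, I obtain that the event
$$\mathcal{A}_{\gamma,\omega}\|f\|_{L^{p,q}(\mathbb{R}\times\mathbb{R}^d)}\leq\bigg\|\{(f\ast\psi)(x_j,y_k)\}_{j,k}\bigg\|_{l^{p,q}}\quad\text{for every }f\in V_N^{p,q}(\Phi)$$
holds with probability at least $1-\mathcal{A}_1 e^{-nm\beta_1}-\mathcal{A}_2 e^{-nm\beta_2}$. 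The step that requires care here is purely a matching of constants: substituting $\omega=\tilde{\beta}/\alpha_2$, so that $\omega/(c^{*}\|\psi\|_{L^{1,1}(C_K)})=\tilde{\beta}(\alpha_2 c^{*})^{-1}/\|\psi\|_{L^{1,1}(C_K)}$, turns the generic exponents $\beta_1,\beta_2$ of Theorem \ref{thm3.5} into precisely the expressions displayed in \eqref{neweq}, while $\mathcal{A}_1,\mathcal{A}_2$ and $c^{*}$ are unchanged. On this event the finite-rank linear sampling operator $S:V_N^{p,q}(\Phi)\to\mathbb{C}^{nm}$, $Sf=\big((f\ast\psi)(x_j,y_k)\big)_{j,k}$, is bounded below and hence injective.

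The reconstruction functions are then obtained by inverting $S$. Because $V_N^{p,q}(\Phi)$ is finite-dimensional and $S$ is injective, $S$ is an isomorphism onto its range and therefore admits a bounded linear left inverse $L:\mathbb{C}^{nm}\to V_N^{p,q}(\Phi)$ with $LS=\mathrm{Id}$; concretely one may fix an inner product on $V_N^{p,q}(\Phi)$ and take the Moore--Penrose inverse $L=(S^{*}S)^{-1}S^{*}$, which is well defined since $S^{*}S$ is invertible. Defining $G_{j,k}:=L(e_{j,k})\in V_N^{p,q}(\Phi)$, where $\{e_{j,k}\}$ is the standard basis of $\mathbb{C}^{nm}$, linearity gives, for every $f\in V_N^{p,q}(\Phi)$,
$$f=L(Sf)=L\bigg(\sum_{j=1}^{n}\sum_{k=1}^{m}(f\ast\psi)(x_j,y_k)\,e_{j,k}\bigg)=\sum_{j=1}^{n}\sum_{k=1}^{m}(f\ast\psi)(x_j,y_k)\,G_{j,k},$$
which is the asserted formula, valid on the event of probability at least \eqref{neweq}.

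The main obstacle is not conceptual but a matter of bookkeeping: the crux is the verification in the second paragraph that the substitution $\omega=\tilde{\beta}/\alpha_2$ transforms the constants $\mathcal{A}_{\gamma,\omega},\beta_1,\beta_2$ of Theorem \ref{thm3.5} into exactly the quantities appearing in the probability bound \eqref{neweq}, together with the check that $\omega\leq\|\psi\|_{L^{1,1}(C_K)}$ so that Theorem \ref{thm3.5} genuinely applies. Once the probabilistic lower sampling inequality is secured, the passage to the explicit reconstruction is a soft finite-dimensional linear-algebra argument.
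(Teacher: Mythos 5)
Your proposal is correct and follows essentially the same route as the paper: hypothesis \eqref{4eq1} combined with the upper bound in $(A_1)$ shows that (after normalization) all of $V_N^{p,q}(\Phi)$ falls under Theorem \ref{thm3.5} with $\omega=\tilde{\beta}/\alpha_2$, whose constants $\beta_1,\beta_2$ then reproduce the probability bound \eqref{neweq} exactly, and the functions $G_{j,k}$ are obtained by inverting the resulting injective finite-rank sampling operator. One small imprecision: a nonzero $f$ with $\|f\|_{L^{p,q}(\mathbb{R}\times\mathbb{R}^d)}<1$ need not lie in $V_{N,\omega,\psi}^{p,q}(\Phi)$ literally, since the constraint $\|f\ast\psi\|_{L^{p,q}(C_K)}\geq\omega$ is not scale-invariant; the inclusion should be stated on the unit sphere, $V_N^{p,q,*}(\Phi)\subseteq V_{N,\omega,\psi}^{p,q,*}(\Phi)$, with the extension to all of $V_N^{p,q}(\Phi)$ following from the homogeneity of the sampling inequality, which is what your argument implicitly uses.
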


\noindent
Using the sampling inequality provided in Theorem \ref{rtthm}, we shall give a reconstruction formula for the functions in  the set $V^{p,q}_{N,\psi}(\Phi,\mu,C_K),$ given by \eqref{rte1}.

\begin{thm}
Let $\Phi,\,\psi$ and $\rho$ satisfy the assumptions $(A_1)$, $(A_2)$ and $(A_3)$ respectively. Also, let $\{(x_j,y_k)\}_{j,k\in\mathbb{N}}$ denote a sequence of i.i.d random variables over a cuboid  $C_K\subset\mathbb{R}\times\mathbb{R}^d$ drawn from a probability distribution with probability density function $\rho$. For $N\in\mathbb{N},\,0<\mu\leq 1$ and $0<\eta<\mu \mathcal{C}_{\rho,1}$, let $m,n\in\mathbb{N}$ be such that  \eqref{rteq3} is satisfied.
Then, there exist functions $\{G_{j,k}\}_{\substack{j=1,2,\cdots ,n;\\k=1,2,\cdots ,m}}$ such that
every $f\in V^{p,q}_{N,\psi}(\Phi,\mu,C_K)$ can be reconstructed by 
\[f(x,y)=\sum_{j=1}^n\sum_{k=1}^m(f\ast\psi)(x_j,y_k)G_{j,k}(x,y),\quad (x,y)\in\mathbb{R}\times\mathbb{R}^d\]
with probability atleast \[1-\mathcal{A}_1 \exp\left(-nm\frac{3\eta^2}{4c^{\ast}(6c^{\ast}+\eta)}\right)-\mathcal{A}_2 \exp\left(-nm\frac{\eta^2}{18\sqrt{2}(81+2\eta)}\right),\]
where $\mathcal{A}_1, \mathcal{A}_2$ and $c^{\ast}$ are as in Lemma \ref{lemm3.3}. \label{rtthm2}
\end{thm}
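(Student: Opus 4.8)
The plan is to reduce the reconstruction to a finite-dimensional left-inversion problem and to build the functions $G_{j,k}$ as a dual system extracted from the sampling inequality of Theorem \ref{rtthm}. First I would condition on the event $\Omega_0$ on which the two-sided inequality \eqref{E:Smplg_Ineq2} holds for every $f\in V_{N,\psi}^{p,q}(\Phi,\mu,C_K)$. By Theorem \ref{rtthm}, under hypothesis \eqref{rteq3} this event carries exactly the probability displayed in the present statement, so the probability bound is inherited verbatim and no new concentration estimate is required; all remaining arguments are deterministic and take place on $\Omega_0$.

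On $\Omega_0$ the lower bound in \eqref{E:Smplg_Ineq2} shows that the sampling (analysis) operator
\[
A\colon V_N^{p,q}(\Phi)\to\mathbb{C}^{nm},\qquad (Af)_{j,k}=(f\ast\psi)(x_j,y_k),
\]
is bounded below on $V_{N,\psi}^{p,q}(\Phi,\mu,C_K)$, so $Af\neq 0$ for every nonzero $f$ in that set. Since $V_N^{p,q}(\Phi)$ is finite dimensional (of dimension $r(2N+1)^{d+1}$), I would fix any inner product on it, represent each evaluation functional $f\mapsto(f\ast\psi)(x_j,y_k)$ by a Riesz vector $h_{j,k}\in V_N^{p,q}(\Phi)$, and form the frame operator $Sf=\sum_{j,k}\langle f,h_{j,k}\rangle h_{j,k}$, so that $\langle Sf,f\rangle=\sum_{j,k}|(f\ast\psi)(x_j,y_k)|^2$. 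Once $S$ is seen to be invertible, the canonical choice $G_{j,k}=S^{-1}h_{j,k}$ gives $f=\sum_{j,k}\langle f,h_{j,k}\rangle G_{j,k}=\sum_{j,k}(f\ast\psi)(x_j,y_k)\,G_{j,k}$, which is precisely the asserted formula, and these $G_{j,k}$ are fixed functions independent of $f$.

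The main obstacle is that the lower bound in \eqref{E:Smplg_Ineq2} is available only on the set $V_{N,\psi}^{p,q}(\Phi,\mu,C_K)$, which is scalar-homogeneous but not additively closed — a cone rather than a linear subspace — whereas invertibility of $S$ needs $\sum_{j,k}|(f\ast\psi)(x_j,y_k)|^2>0$ for every nonzero $f$ in all of $V_N^{p,q}(\Phi)$. I would bridge this gap by first using the equivalence of the $\ell^1$ and $\ell^2$ norms on $\mathbb{C}^{nm}$ to convert the $\ell^1$-type lower bound of Theorem \ref{rtthm} into a strictly positive lower frame bound for $\{h_{j,k}\}$ on $\mathrm{span}\,V_{N,\psi}^{p,q}(\Phi,\mu,C_K)$; the linear identity $f=\sum_{j,k}(f\ast\psi)(x_j,y_k)G_{j,k}$ then propagates from the cone to its span, which contains the target set. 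Verifying that $A$ is genuinely injective there — that no nonzero element of the span is annihilated by all sample evaluations — is the delicate point, and I expect it to be settled exactly as in Theorem \ref{thmrec1}: one transfers the stability of the shifts of $\Phi$ granted by $(A_1)$ through the convolution with $\psi$ and the sampling lower bound, obtaining that the composite map from the coefficient sequence $\mathbf{c}$ to the sample vector is bounded below, whence $S$ is invertible on the relevant space and the dual functions $G_{j,k}$ are well defined.
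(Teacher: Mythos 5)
Your skeleton is the right one --- condition on the event $\Omega_0$ where \eqref{E:Smplg_Ineq2} holds, inherit the probability bound of Theorem \ref{rtthm} verbatim, and then argue deterministically in the finite-dimensional space $V_N^{p,q}(\Phi)$ --- and you correctly isolate the crux, namely that $V^{p,q}_{N,\psi}(\Phi,\mu,C_K)$ is a cone, not a subspace. But your bridge across that crux does not hold up, for three reasons. First, the $\ell^1$--$\ell^2$ equivalence on $\mathbb{C}^{nm}$ only rescales the lower bound of \eqref{E:Smplg_Ineq2} for the \emph{same} functions it already covers, i.e.\ for $f$ in the cone; it cannot produce a ``lower frame bound on $\mathrm{span}\,V^{p,q}_{N,\psi}(\Phi,\mu,C_K)$'', because an element of the span is a linear combination of cone elements and lower bounds do not survive linear combinations. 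Second, the proposed ``propagation'' is circular: a linear identity $f=\sum_{j,k}(f\ast\psi)(x_j,y_k)G_{j,k}$ does extend from the cone to its span once it is established on the cone, but establishing it on the cone is \emph{equivalent} to injectivity of the sampling map on the span (if $g=\sum_i\lambda_i f_i$ with $f_i$ in the cone and every sample of $g$ vanishes, applying the formula termwise gives $g=0$), which is exactly what remains unproved. Third, deferring that injectivity to ``exactly as in Theorem \ref{thmrec1}'' is not legitimate: that theorem carries the additional hypothesis \eqref{4eq1}, which is precisely what places every unit-norm element of $V^{p,q}_N(\Phi)$ into a set where the sampling inequality applies (with $\omega=\tilde\beta/\alpha_2$), and Theorem \ref{rtthm2} has no such hypothesis.

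The missing injectivity is a genuine obstruction rather than a technicality that soft arguments can absorb. Take compactly supported generators (e.g.\ Example \ref{example1}), $N$ large compared with $K_1,K_2$, and $h=\phi_1(\cdot-k_1,\cdot-k_2)\in V^{p,q}_N(\Phi)$ with $|k_1|,|k_2|$ so large that $\mathrm{supp}(h)\cap C_{2K}=\emptyset$. Then $h\ast\psi\equiv 0$ on $C_K$ (since $\mathrm{supp}(\psi)\subseteq C_K$), so every sample of $h$ vanishes for \emph{every} choice of sample points; yet $h$ lies in the span of the cone as soon as the cone contains some $f_1$ satisfying its defining inequality strictly (which happens, for instance, for small $\mu$), because then $f_1+\epsilon h$ is again in the cone for small $\epsilon>0$ and $h=\epsilon^{-1}\bigl((f_1+\epsilon h)-f_1\bigr)$. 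On such a configuration $f_1$ and $f_1+\epsilon h$ have identical samples, so no single family $\{G_{j,k}\}$ can reproduce both. Consequently, any correct proof must bring in something beyond the event $\Omega_0$ and the sampling inequality on the cone --- an assumption of the type \eqref{4eq1}, a restriction tying $N$ to $C_K$, or a direct argument that the samples remain injective on the subspace spanned by this particular cone --- and your proposal supplies none of these.
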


\section{Examples and numerical simulation} \label{sec5} In this section, we give  some examples of generators $\phi_{i}$   which satisfy the assumption $(A_{1})$. We also validate the results obtained in the previous section numerically using some of these examples.

For $n \in {\mathbb{N}}$, we consider the cardinal B-spline of degree $n$ defined on ${\mathbb{R}}$ by  $$B_{n}(x)= (B_{0} * B_{0}* \cdots * B_{0})(x), (n \ \text{convolutions}),$$ where   $B_{0}(x)= \chi_{[-\frac{1}{2}, \frac{1}{2}]}(x) .$

\begin{ex}
We define $\phi:{\mathbb{R}}^2\rightarrow {\mathbb{R}}$ by  $\phi(x,y):=B_{n}(x)B_{n}(y), n \in \mathbb{N}.$ \label{example1}
\end{ex}

\begin{ex}For $r, N\in {\mathbb{N}},$ let $u_{i},v_{i}\in {\mathbb{Z}}$  be such that
$|u_{i}-u_{j}|>2 N$  and  $|v_{i}-v_{j}|>2 N$ for $1\le i,j\le r,i\neq j.$ Let  $\phi_{i}(x,y)= B_{n}(x - u_{i})B_{n}(y - v_{i}), n \in \mathbb{N}$ for $i= 1,2,\dots, r.$ \label{example2}
\end{ex}

\begin{ex}
 Let $\phi_{1}$ and $\phi_{2}$ be two compactly supported continuous functions  such that $\phi_1(\cdot-k_1,\cdot-k_2)$ is orthogonal to  $\phi_2(\cdot-k'_1,\cdot-k'_2)$ for every $k_1,k'_1,k_2,k'_2\in {\mathbb{Z}}$ and there exist positive constants $A_{1}, B_{1}, A_{2}$ and $B_{2}$ such that
$A_{1} \leq P_{\phi_{1}}(\omega) \leq B_{1} \; \text{a.e.}$ and
$A_{2} \leq P_{\phi_{2}}(\omega) \leq B_{2} \; \text{a.e.}.$
\label{example3}
\end{ex}

\noindent
For numerical implementations,  we consider the cuboid $C_{K} = [-2.5,2.5] \times [-2.5,2.5].$ The function $f(x,y)$ and the averaging function $\psi(x,y)$ are as defined below:
\begin{eqnarray*}
f(x,y) &=& 3B_{2}(x)B_{2}(y-1) - 5B_{2}(x+1)B_{2}(y) \ \ \text{and} \\
\psi(x,y) &=&  \chi_{[-\frac{1}{8}, \frac{1}{8}] \times [-\frac{1}{8}, \frac{1}{8}]}(x,y).
\end{eqnarray*}
One can easily verify that the functions $f$ and $\psi$ satisfy the conditions of Theorem \ref{thmrec1}. Using the reconstruction formula provided in Theorem \ref{thmrec1}, the simulation is performed for various values of the sample size $nm.$  The graphical representations of the function $f$ and its reconstructed version  $\widetilde{f}$ corresponding to 25 (m=5 and n=5) random samples are shown in Figures \ref{figure1a} and \ref{figure1b} respectively.

\begin{figure}[H]
\includegraphics[scale=0.5]{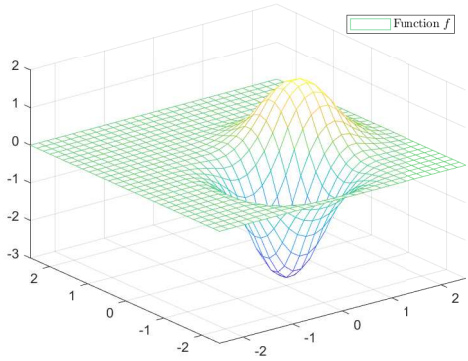}
\caption{The 3D plot of the function $f(x,y).$}
\label{figure1a}
\end{figure}
\begin{figure}[H]
\includegraphics[scale=0.5]{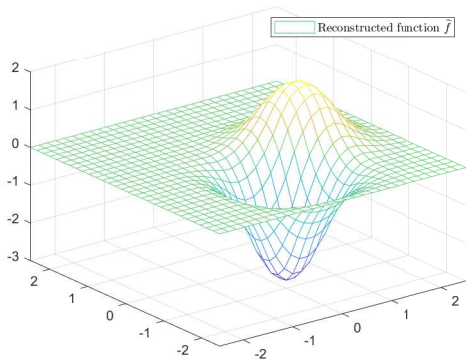}
\caption{The 3D plot of  $\tilde{f}$ corresponding to 25 samples (n=5, m=5).}
\label{figure1b}
\end{figure}

The reconstruction error $\|f-\widetilde{f}\|$ is  also computed with respect to $L^{\infty}, L^{1}$ and $L^{2}$ norms for various sample sizes. The numerical results are presented in Table \ref{table1}.
\begin{table}
\begin{tabular}{| *{5}{c|} }
    \hline
\multicolumn{2}{|c|}{Sample size}    & \multicolumn{3}{|c|}{Reconstruction Error}
 \\
     \hline
$n$   &   $m$  &   $\|f-\widetilde{f}\|_{L^{\infty}(C_{K})}$  &  $\|f-\widetilde{f}\|_{L^{1}(C_{K})}$  &   $\|f-\widetilde{f}\|_{L^{2}(C_{K})}$    \\
    \hline
$5$   &   $5$  &   $2.3315 \ e^{-15}$  &   $7.6548 \ e^{-15}$ &   $7.7786 \ e^{-30}$ \\
    \hline
$7$   &   $7$         &  $2.2204 \ e^{-15}$      &             $3.8640  \ e^{-15}$&  $2.5115 \ e^{-30}$ \\
    \hline
    $10$   &   $10$    &  $1.3323 \ e^{-15}$     &  $3.1535 \ e^{-15}$     &  $1.5283 \ e^{-30}$             \\
    \hline
\end{tabular}
\caption {The reconstruction error $\|f- \widetilde{f}\| $.}
\label{table1}
\end{table}
\vskip 1em
\noindent
 Further, to test Theorem \ref{rtthm2} numerically, we  consider  $C_{K} = [-3,3]\times [-3,3],$
\begin{eqnarray*}
f(x,y) &=& B_{1}(x)B_{1}(y) +3B_{1}(x-1)B_{1}(y-1) \ \ \text{and} \\
\psi(x,y) &=& \chi_{[\frac{1}{2}, \frac{3}{2}] \times [\frac{1}{2}, \frac{3}{2}]}.
\end{eqnarray*}
It can be easily shown that $f \in V_{1,\psi}^{2,2}(\Phi, \mu, C_{K}),$ where $ 0 < \mu \leq 1.$ The reconstruction formula  in Theorem \ref{rtthm2} has been used for reconstructing the function $f$ for different sample sizes.  The  Figures \ref{figure2a} and \ref{figure2b}  show the function $f$ and its reconstructed version $\widetilde{f}$ corresponding to 25 random samples($m=5, n=5$). The errors in $L^{\infty}, L^{1}$ and $L^{2}$ norms are calculated and the numerical values are given in Table \ref{table2}.

\begin{figure}[H]
\includegraphics[scale=0.5]{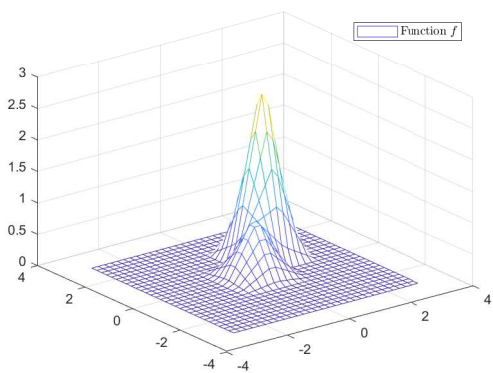}

\caption{The 3D plot of the function $f(x,y)$}
\label{figure2a}
\end{figure}

\begin{figure}[H]
\includegraphics[scale=0.5]{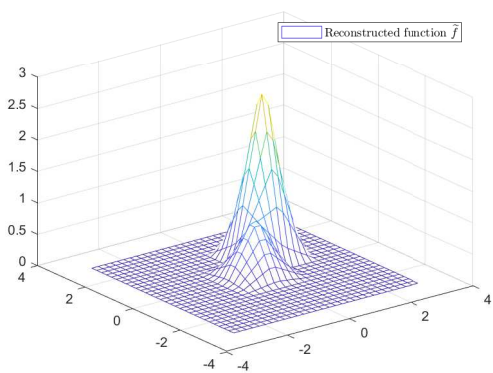}
\caption{The 3D plot of $\widetilde{f}$ for $n=5, m=5.$} \label{figure2b}
\end{figure}

\begin{table}[h]
\begin{tabular}{| *{5}{c|} }
    \hline
\multicolumn{2}{|c|}{Sample size}    & \multicolumn{3}{|c|}{Reconstruction Error}
 \\
     \hline
$n$  &  $m$  &  $\|f-\widetilde{f}\|_{L^{\infty}(C_{K})}$  &  $\|f-\widetilde{f}\|_{L^{1}(C_{K})}$  &   $\|f-\widetilde{f}\|_{L^{2}(C_{K})}$    \\
    \hline
$5$   &   $5$  &   $7.8930 \ e^{-14}$  &   $1.6052 \ e^{-13}$ &   $4.7277 \ e^{-27}$ \\
    \hline
$7$   &   $7$         &  $5.5227 \ e^{-14}$      &             $5.5811  \ e^{-14}$&  $1.8715 \ e^{-27}$ \\
    \hline
    $10$   &   $10$    &  $8.8818 \ e^{-16}$     &  $2.3891 \ e^{-15}$     &  $7.3617 \ e^{-31}$             \\
    \hline 
\end{tabular} 
\caption{The reconstruction error $\|f- \widetilde{f}\|$.}
\label{table2}
\end{table}

\vskip 1 em
\noindent {\bf Acknowledgment:}
The second author S. Arati acknowledges the financial support of National Board for Higher Mathematics, Department of Atomic Energy(Government of India). The third author P. Devaraj acknowledges the financial support of the Department of Science and Technology(Government of India) under the research grant DST-SERB Research Grant MTR/2018/000559.


\begin{thebibliography}{1}
\bibitem{AldroubiSunTang} A. Aldroubi, Q. Sun and W.S. Tang, Convolution, average sampling, and a Calderon resolution of the identity for shift-invariant spaces, Journal of Fourier Analysis and Applications. 11(2) (2005) 215-244.
\bibitem{bass-grochenig1} R. F. Bass and K. Gröchenig, Random sampling of multivariate trigonometric polynomials, SIAM Journal on Mathematical Analysis. 36(3) (2004) 773-795.

\bibitem{bass-grochenig2} R. F. Bass and K. Gröchenig, Random sampling of bandlimited functions, Israel Journal of Mathematics. 177(1) (2010) 1-28.

\bibitem{bass-grochenig3} R. F. Bass and K. Gröchenig, Relevant sampling of band-limited functions, Illinois Journal of Mathematics. 57(1) (2013) 43-58.

\bibitem{benzone} A. Benedek and R. Panzone, The space $L_{p}$ with mixed norm, Duke Mathematical Journal. 28(3) (1961) 301–324.

\bibitem{bennett} G. Bennett, Probability inequalities for the sum of independent random variables, Journal of the American Statistical Association. 57(297) (1962) 33-45.

\bibitem{ButzerStens} P.L. Butzer and R.L. Stens,  Sampling theory for not necessarily band-limited functions: A historical overview, SIAM Review. 34(1) (1992) 40-53.

\bibitem{deva1}  P. Devaraj and  S. Yugesh, A local weighted average sampling and reconstruction theorem over shift invariant subspaces, Results in Mathematics. 71 (2017) 319-332.


\bibitem{furr}  H. Führ and J. Xian, Relevant sampling in finitely generated shift-invariant spaces, Journal of Approximation Theory. 240 (2019) 1–15 .

%
%
%
%
%

\bibitem{Goyalpatelsampath} P. Goyal, D. Patel and S. Sampath, Random sampling in reproducing kernel subspace of mixed Lebesgue spaces, arXiv:2102.08632.

%
%
%

\bibitem{jiangli2} Y. Jiang and W. Li, Random sampling in weighted reproducing kernel subspaces of $L_{\nu}^{p}(\mathbb{R}^{d})$, arXiv:2003.02993.

\bibitem{jiangli3} Y. Jiang and W. Li, Random sampling in multiply generated shift-invariant subspaces of mixed Lebesgue spaces $L^{p,q}(\mathbb{R}\times \mathbb{R}^{d}), $ Journal of Computational and Applied Mathematics. 386 (2021) 113237.


\bibitem{kan} S. Kang and K. H. Kwon, Generalized average sampling in shift invariant spaces, Journal of Mathematical Analysis and Applications. 377 (2011) 70-78.


\bibitem{RuiBei} R. Li, B. Liu, R. Liu and Q. Y. Zhang, The $L^{p,q}$-stability of the shifts of finitely many functions
in mixed Lebesgue spaces $L^{p,q}(\mathbb{R}^{d+1})$,  Acta Mathematica Sinica, English  Series. 34(6) (2018) 1001-1014.

\bibitem{LiSunXian} Y. Li, Q. Sun and J. Xian, Random sampling and reconstruction of concentrated signals in a reproducing kernel space, Applied and Computational Harmonic Analysis. 54 (2021) 273-302.

\bibitem{LiWenXian2} Y. Li, J. Wen and J. Xian, Reconstruction from convolution random sampling in local shift invariant spaces, Inverse Problems. 35 (2019) 125008.

 

 \bibitem{patelsampath} D. Patel and S. Sampath, Random sampling in reproducing kernel subspaces of $L^{p}(\mathbb{R}^{n})$, Journal of Mathematical Analysis and Applications. 491 (2020)
124270.

 \bibitem{NashedSunXian} M. Z. Nashed, Q. Sun And J Xian, Convolution sampling and reconstruction of signals in a reproducing kernel subspace, Proceedings of the American Mathematical Society. 141(6) (2013) 1995-2007.


    \bibitem{sun5} W. Sun and X. Zhou, Reconstruction of band-limited functions from local averages, Constructive Approximation. 18 (2002) 205-222.
    
\bibitem{sun6} W. Sun and X. Zhou, Reconstruction of band-limited signals from local averages, IEEE Transactions on Information Theory. 48 (2002) 2955-2963.

   \bibitem{sun3} W. Sun and X. Zhou, Average sampling in spline subspaces, Applied Mathematics Letters. 15 (2002) 233-237.
   
\bibitem{sun2} W. Sun and X. Zhou, Reconstruction of functions in spline subspaces from local averages, Proceedings of the American Mathematical Society. 131(8) (2003) 2561-2571.

\bibitem{sun4} W. Sun and X. Zhou, Average sampling in shift invariant subspaces with symmetric averaging functions, Journal of Mathematical Analysis and Applications. 287(1) (2003) 279-295.

\bibitem{yang} J. Yang,  Random sampling and reconstruction in multiply generated shift-invariant spaces, Analysis and Applications. 17(2) (2019) 323–347.

\bibitem{YangTao} J. Yang and X. Tao, Random sampling and approximation of signals with bounded derivatives, Journal of Inequalities and Applications. 107 (2019). 

\bibitem{yangwei}  J. Yang  and W. Wei,  Random sampling in shift invariant spaces, Journal of Mathematical Analysis and Applications. 398(1) (2013) 26–34.





\end{thebibliography}
\end{document}